    \tikzset{point/.style={circle,inner sep=0pt,minimum size=3pt,fill=red}}
\theoremstyle{plain}
\newtheorem{lem}{Lemma}[section]
\newtheorem{prop}[lem]{Proposition}
\newtheorem{thm}[lem]{Theorem}
\newtheorem{rem}{Remark}
\theoremstyle{definition}
\newtheorem{ex}[lem]{Example}
\theoremstyle{remark}
\numberwithin{equation}{section}
\DeclareMathOperator{\dom}{dom}
\DeclareMathOperator{\ext}{ext}
\DeclareMathOperator{\Eq}{Eq}
\newcommand{\R}{\mathbb R}
\newcommand{\Z}{\mathbb Z}
\newcommand{\N}{\mathbb N}
\renewcommand{\epsilon}{\varepsilon}
\title{ZERO TEMPERATURE LIMITS OF Equilibrium STATES FOR SUBADDITIVE POTENTIALS and approximation of maximal Lyapunov exponent}
\author{reza mohammadpour }
\address{Department of Dynamical systems, Institute of Mathematics of the Polish Academy of Sciences, ul.~\'Sniadeckich 8, 00-656 Warsaw, Poland}
\date{\today}
\subjclass[2010]{37A60, 37H15, 37D35, 37N40}
\keywords{thermodynamic formalism, subadditive potentials, zero temperature limits, maximal Lyapunov exponent. }%
\email{rmohammadpour@impan.pl}
\begin{document}
\title[ZERO TEMPERATURE LIMITS OF Equilibrium STATES]{ZERO TEMPERATURE LIMITS OF Equilibrium STATES FOR SUBADDITIVE POTENTIALS and approximation of the maximal Lyapunov exponent}

\maketitle

\begin{abstract}
In this paper we study ergodic optimization problems for subadditive sequences of functions on a topological dynamical system.  We prove that for $t\rightarrow \infty$ any accumulation point of a family of equilibrium states is a maximizing measure. We show that the Lyapunov exponent and entropy of equilibrium states converge in the limit $t\rightarrow \infty$ to the maximum Lyapunov exponent and entropy of maximizing measures.\\
In the particular case of matrix cocycles we prove that the maximal Lyapunov exponent can be approximated by Lyapunov exponents of periodic trajectories under certain assumptions.
\end{abstract}

\section{Introduction and statement of the results}
Throughout this paper $X$ is a compact metric space that is endowed with the metric $d$. We call $(X, T)$ a \textit{topological dynamical system} (TDS), if $T:X \rightarrow X$ is a continuous map on the compact metric space $X$. We say that $\Phi:=\{\log \phi_{n}\}_{n=1}^{\infty}$  is a \textit{subadditive potential} if each $\phi_{n}$ is a continuous non-negative-valued function on $X$ such that
\[ 0\leq \phi_{n+m}(x) \leq \phi_{n}(x) \phi_{m}(T^{n}(x)) \hspace{0,2cm} \forall x\in X, m,n \in \N.\]

Furthermore,  $\Phi=\{\log\phi_{n}\}_{n=1}^{\infty}$ is said to be an $\textit{almost additive potential}$ if there exists a constant $C > 0$ such that for any $m,n \in \N$, $x\in X$, we have
\[ C^{-1}\phi_{n}(x)\phi_{m}(T^{n})(x) \leq \phi_{n+m}(x)\leq C \phi_{n}(x) \phi_{m}(T^{n}(x)).\]

For any $T-$invariant measure $\mu$ such that $\log^{+} \phi_{1} \in L^{1}(\mu)$, the \textit{pointwise Lyapunov exponent}
\[ \chi(x, \Phi):= \lim_{n\rightarrow \infty}\frac{1}{n} \log \phi_{n}(x)\in [-\infty, \infty),\]
exists for a.e. point $x$.

By Kingman's subadditive ergodic theorem \cite[Theorem 3.3]{V}, the \textit{Lyapunov exponent of measure $\mu$} \[\chi(\mu, \Phi):=\lim_{n\rightarrow \infty}\frac{1}{n} \int \log \phi_{n}(x) d\mu(x)\] exists. If $\mu$ is ergodic then $\chi(x, \Phi)=\chi(\mu, \Phi)$ for $\mu-$a.e. point $x$. We remark that Furstenberg and Kesten \cite{FK1} first proved, under a suitable integrability assumption, the existence of maximal and minimal Lyapunov exponents for matrix cocycles. Indeed, their result is a straightforward consequence of the Kingman's subadditive ergodic theorem, where it was proved later.

In this paper, we are interested in the \textit{maximal Lyapunov exponent}, defined as 
\[ \beta(\Phi):=\lim_{n\rightarrow \infty}\frac{1}{n}\log \sup_{x\in X} \phi_{n}(x).\]

 We denote by $\mathcal{M}(X,T)$ the space of all $T-$invariant Borel probabilty measures on X. Morris \cite{M2} showed that one can define the maximal Lyapunov exponent as the supremum of the Lyapunov exponents of measures over invariant measures. That means,
\begin{equation}\label{invariant1}
\beta(\Phi)=\sup_{\mu \in \mathcal{M}(X,T)} \chi(\mu, \Phi).
\end{equation}
Feng and Huang \cite{FH} gave a different proof of it.

Let us define the set of \textit{maximizing measures} of $\Phi$ to be the set of measures on $X$ given by 
\[ \mathcal{M}_{\max}(\Phi):=\{\mu \in \mathcal{M}(X, T), \hspace{0,2cm} \beta(\Phi)=\chi(\mu, \Phi) \}. \]

In this paper, we study the behavior of the equilibrium measures $(\mu_{t})$ for the subadditive potentials $t\Phi$ when $t\rightarrow \infty$. In the thermodynamic interpretation of the parameter $t$, it is the \textit{inverse temperature}. The limits $t\rightarrow \infty$ are called \textit{zero temperature limits}, and the accumulation points of the measures $(\mu_{t})$ as $t\rightarrow \infty$ are called \textit{ground states}.

The topic of \textit{ergodic optimization} revolves around realizing invariant measures which maximize the Lyapunov exponents. Zero temperature limits laws are also related to ergodic optimization, because for $t\rightarrow \infty$ any accumulation point of the equilibrium measure $(\mu_{t})$ will be a maximizing measure $\Phi$ (maximizing $\chi(\mu, \Phi))$. We refer the reader to \cite{BG} and \cite{J}.

The behavior of the equilibrium measure $(\mu_{t})$ as $t\rightarrow \infty$ has also been analyzed. In particular, the continuities of zero temperature limits $(\mu_{t})_{t\rightarrow \infty}$ in the sense,
\begin{equation}\label{eq2}
 \chi(\mu, \Phi)=\lim_{t\rightarrow \infty}\chi(\mu_{t}, \Phi),
\end{equation} 
and
\begin{equation}\label{eq3}
h_{\mu}(T)=\lim_{t\rightarrow \infty}h_{\mu_{t}}(T),
\end{equation}
have been investigated by many authors  \cite{DUZ}, \cite{IY}, \cite{J}, \cite{JMU}, \cite{M1}, \cite{WZ}, \cite{Z}.

In the non-compact space setting, (\ref{eq2}) and (\ref{eq3}) were proved by Jenkinson, Mauldin and Urba\'nski \cite{JMU}, and  Morris \cite{M1} on the additive potential $\psi:X \rightarrow \R$. In fact, the proof of Theorem \ref{main1} will be based on ideas from those works. Moreover, this kind of result is known for almost subadditive potentials by Zhao \cite{Z} under the specification property, upper semi-continuity of entropy and finite topological entropy assumptions.

The goal of this paper is to extend the above results for subadditive potentials.

Note that even though we know the existence of an accumulation point for the sequence $(\mu_{t})$ (see Proposition \ref{eq11}), this does not imply that the $\lim_{t\rightarrow \infty}\mu_{t}$ exists. In fact, Chazottes and Hochman \cite{CH} constructed an example on compact sub-shifts of finite type and H\"older potentials, where there is no convergence. For more information about zero temperature see \cite{J}.

Our main results are Theorems $1.1$ and $1.3$ formulated as follows:
\begin{thm}\label{main1}
Let $(X,T)$ be a TDS such that the entropy map $\mu \mapsto h_{\mu}(T)$ is upper semi-continuous and topological entropy $h_{top}(T)<\infty.$ Suppose that $\Phi=\{\log \phi_{n}\}_{n=1}^{\infty}$ is a subadditive potential on the compact metric space $X$ which satisfies $\beta(\Phi)>-\infty.$ Then any weak$^{\ast}$ accumulation point $\mu$ of a family of equilibrium measures $(\mu_{t})$ for potentials $t\Phi$, where $t>0$, is a Lyapunov maximizing measure for $\Phi$. Moreover,
\begin{itemize}
\item[(i)] $\chi(\mu, \Phi)=\lim_{t\rightarrow \infty} \chi(\mu_{t}, \Phi),$
\item[(ii)]$h_{\mu}(T)=\lim_{t \rightarrow \infty} h_{\mu_{t}}(T)=\max\{h_{\nu}(T), \nu \in \mathcal{M}_{\max}(\Phi)\}.$
\end{itemize}
Furthermore, $\beta(\Phi)$ can be approximated by 
 Lyapunov exponents of equilibrium measures of a subadditive potential $t\Phi$.
\end{thm}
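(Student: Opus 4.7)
The plan is to exploit the variational characterization of the equilibrium measure: for each $t>0$, $\mu_{t}$ satisfies
\[
h_{\mu_{t}}(T)+t\chi(\mu_{t},\Phi) \;=\; P(t\Phi) \;\geq\; h_{\nu}(T)+t\chi(\nu,\Phi) \qquad \forall\, \nu\in\mathcal{M}(X,T),
\]
and to combine this with two upper semi-continuity facts. Entropy is upper semi-continuous by hypothesis, and the Lyapunov exponent $\chi(\cdot,\Phi)$ is also upper semi-continuous on $\mathcal{M}(X,T)$: subadditivity and Fekete's lemma give $\chi(\nu,\Phi)=\inf_{n}\tfrac{1}{n}\int\log\phi_{n}\,d\nu$, and each functional $\nu\mapsto\int\log\phi_{n}\,d\nu$ is upper semi-continuous because it equals $\inf_{M}\int\max(\log\phi_{n},-M)\,d\nu$, an infimum of weak$^{\ast}$ continuous bounded integrals. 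In particular $\mathcal{M}_{\max}(\Phi)$ is a nonempty weak$^{\ast}$ compact subset of $\mathcal{M}(X,T)$.

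First I would show $\lim_{t\to\infty}\chi(\mu_{t},\Phi)=\beta(\Phi)$, from which the final ``approximation'' sentence of the theorem is immediate and, via upper semi-continuity, so is (i). The upper bound $\chi(\mu_{t},\Phi)\leq\beta(\Phi)$ is (\ref{invariant1}). For the matching lower bound, pick any maximizing measure $\nu_{0}$ and substitute it into the equilibrium inequality:
\[
h_{\mu_{t}}(T)+t\chi(\mu_{t},\Phi)\;\geq\; h_{\nu_{0}}(T)+t\beta(\Phi);
\]
using $h_{\mu_{t}}(T)\leq h_{\mathrm{top}}(T)<\infty$, this rearranges to
\[
\chi(\mu_{t},\Phi)\;\geq\;\beta(\Phi)-\frac{h_{\mathrm{top}}(T)-h_{\nu_{0}}(T)}{t}\;\longrightarrow\;\beta(\Phi).
\]
For any weak$^{\ast}$ accumulation point $\mu$ of $(\mu_{t})$ along a sequence $t_{k}\to\infty$, upper semi-continuity of $\chi(\cdot,\Phi)$ then yields $\chi(\mu,\Phi)\geq\limsup_{k}\chi(\mu_{t_{k}},\Phi)=\beta(\Phi)$, so $\mu\in\mathcal{M}_{\max}(\Phi)$.

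For (ii), let $h^{\ast}:=\max\{h_{\nu}(T):\nu\in\mathcal{M}_{\max}(\Phi)\}$; the maximum is attained at some $\nu^{\ast}$ because $\mathcal{M}_{\max}(\Phi)$ is compact and entropy is upper semi-continuous. Feeding $\nu^{\ast}$ into the equilibrium inequality and using $\chi(\mu_{t},\Phi)\leq\beta(\Phi)$ gives
\[
h_{\mu_{t}}(T)\;\geq\; h^{\ast}+t\bigl(\beta(\Phi)-\chi(\mu_{t},\Phi)\bigr)\;\geq\; h^{\ast},
\]
so $\liminf_{t\to\infty}h_{\mu_{t}}(T)\geq h^{\ast}$. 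Conversely, take any sequence $t_{k}\to\infty$ realizing $\limsup_{t\to\infty}h_{\mu_{t}}(T)$; by compactness extract a weakly convergent subsequence $\mu_{t_{k}}\to\mu'$, which is maximizing by the previous step, and upper semi-continuity of entropy delivers $\limsup_{t\to\infty}h_{\mu_{t}}(T)\leq h_{\mu'}(T)\leq h^{\ast}$. The bounds meet, so $\lim h_{\mu_{t}}(T)=h^{\ast}$, and the same sandwich applied to any accumulation point $\mu$ of the full family forces $h_{\mu}(T)=h^{\ast}$.

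The only subtle point I anticipate is the possibility that $\log\phi_{n}=-\infty$ on the zero set of $\phi_{n}$, which could threaten the upper semi-continuity of $\chi(\cdot,\Phi)$; the monotone truncation $\log\phi_{n}=\inf_{M}\max(\log\phi_{n},-M)$ in Paragraph~1 sidesteps this, and the hypothesis $\beta(\Phi)>-\infty$ rules out the degenerate case in which every invariant measure has $\chi(\cdot,\Phi)=-\infty$. A second ingredient used implicitly is the existence of equilibrium states $\mu_{t}$ for the subadditive potential $t\Phi$, which in this setting is provided by the Feng--Huang variational principle under upper semi-continuity of entropy. Everything else is a routine rearrangement of the variational inequality.
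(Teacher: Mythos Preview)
Your argument is correct and, in fact, more streamlined than the paper's own proof. The paper proceeds via convex analysis: it invokes the Feng--Huang identification $\partial P(t)=\{\chi(\mu_{t},\Phi)\}$, uses convexity of $t\mapsto P(t)$ to deduce that $t\mapsto\chi(\mu_{t},\Phi)$ is non-decreasing, and then runs a contradiction argument (comparing slopes of an affine minorant $T_{\nu}$ with $\partial^{e}P$) to show the accumulation point is maximizing. For part (ii) the paper similarly shows that $t\mapsto h_{\mu_{t}}(T)$ and $t\mapsto P(t\Phi-t\beta(\Phi))$ are non-increasing by computing one-sided derivatives, and matches their common limit against $\sup_{\nu\in\mathcal{M}_{\max}}h_{\nu}(T)$ via the variational principle.

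You bypass all of this machinery. By plugging a maximizing measure $\nu_{0}$ (and later the entropy-maximizing $\nu^{\ast}$) directly into the equilibrium inequality and dividing by $t$, you get the limits $\chi(\mu_{t},\Phi)\to\beta(\Phi)$ and $h_{\mu_{t}}(T)\to h^{\ast}$ with nothing more than the two upper semi-continuity facts. The paper's route yields the extra monotonicity statements (e.g.\ $t\mapsto h_{\mu_{t}}(T)$ non-increasing) as byproducts, which your proof does not establish---but these are not asserted in the theorem. Conversely, your approach is self-contained and does not rely on the subgradient characterization of equilibrium states or on Theorem~\ref{incre1}. Your handling of the possible $-\infty$ values of $\log\phi_{n}$ via truncation, and your remark on the existence of $\mu_{t}$ via Feng--Huang, are both appropriate and match what the paper takes from the literature.
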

 Let $\mathcal{A}:X \rightarrow GL(d, \R)$ be a measurable function. We can define a \textit{linear cocycle} $F:X \times \R^{d} \rightarrow X\times \R^{d}$  as
\[ F(x, v)=(T(x), \mathcal{A}(x)v).\]

We say that $F$ is generated by $T$ and $\mathcal{A}$, we will also denote by $(T, \mathcal{A}).$  Observe that $F^{n}(x, v)=(T^{n}(x), \mathcal{A}_{n}(x)v)$ for each $n \geq 1$, where
\[ \mathcal{A}_{n}(x)=\mathcal{A}(T^{n-1}(x))\mathcal{A}(T^{n-2}(x))\ldots\mathcal{A}(x).\]

If $T$ is invertible then so is $F$. Moreover, $F^{-n}(x)=(T^{-n}(x), \mathcal{A}_{-n}(x)v)$ for each $n\geq1$, where
\[\mathcal{A}_{-n}(x)=\mathcal{A}(T^{-n}(x))^{-1}\mathcal{A}(T^{-n+1}(x))^{-1}\ldots\mathcal{A}(T^{-1}(x))^{-1}.\]

A special class of linear cocycles is a class of \textit{locally constant cocycles} which is defined as follows.
\begin{ex}\label{ex1}
Let $X=\{1,\ldots,k\}^{\Z}$ be a symbolic space. Let $T:X \rightarrow X$  be a shift map, i.e. $T(x_{l})_{l}=(x_{l+1})_{l}$. Given a finite set of matrices $\mathcal{A}=\{A_{1},\ldots,A_{k}\}\subset GL(d, \R)$, we define the function $\mathcal{A}:X \rightarrow GL(d, \R)$ by $\mathcal{A}(x_{l})_{l}=A_{x_{0}}$. 
\end{ex}
We say that a homeomorphism $T$ satisfies the \textit{Anosov closing property} if there exists $C, \epsilon, \delta>0$ such that for any $x\in X$ and $n\in \N$ with $d(x, T^{n}(x))< \epsilon$ there exists a point $p \in X$ with $T^{n}(p)=p$ such that the orbits $\mathcal{O}^{+}(T(x)):=\{T^{k}(x), k\in \N \}$, and $\mathcal{O}^{+}(T(p)):=\{T^{k}(p), k\in \N \}$ are exponentially close, i.e.
\[ d(T^{i}(x), T^{i}(p)) \leq C e^{-\delta \min\{i, n-i\}} d(T^{n}(x), x)\]
for every $i=0,\ldots,n.$

Note that shifts of finite type, Axiom A diffeomorphisms, and hyperbolic homeomorphisms are particular systems satisfying the Anosov closing property. See for more information \cite{KH}.

Kalinin and Sadovskaya \cite{KS} proved that if a homeomorphism $T$ satisfies the Anosov closing property, and $\mathcal{A}: X \rightarrow GL(d, \R)$ is a H\"older continuous Banach cocycle, then the maximal Lyapunov exponent can be approximated by Lyapunov exponents of measures supported on periodic orbits. For a locally constant cocycle $(T, \mathcal{A})$, where $\mathcal{A}: X \rightarrow GL(2, \R)$, we show that the maximal Lyapunov exponent can be approximated by Lyapunov exponents of measures supported on periodic orbits. Theorem \ref{main3} is implied by the Kalinin and Sadovskaya's result. However, the methods of proof are different.

We write $\phi_{n}:=\|\mathcal{A}_{n}\|$, where $\| \|$ is the operator norm.

\begin{thm}\label{main3}
Let $(T, \mathcal{A})$ be a locally constant cocycle satisfying the Anosov closing property. Then the maximal Lyapunov exponent $\beta(\Phi)$ can be approximated by Lyapunov exponents of measures supported on periodic orbits.
\end{thm}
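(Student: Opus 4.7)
The plan is to combine Theorem~\ref{main1} with a recurrence argument, exploiting the combinatorial structure of locally constant cocycles. By the final assertion of Theorem~\ref{main1}, for each $\eta>0$ there exist $t>0$ and an equilibrium state $\mu_{t}$ of $t\Phi$ with $\chi(\mu_{t},\Phi)>\beta(\Phi)-\eta$. Since the Lyapunov exponent is affine under the ergodic decomposition in the subadditive setting, some ergodic component of $\mu_{t}$ attains an exponent at least this large, so without loss of generality $\mu_{t}$ is ergodic. By Kingman's subadditive ergodic theorem, for $\mu_{t}$-almost every $x\in X$ we have $\tfrac{1}{n}\log\|\mathcal{A}_{n}(x)\|\to \chi(\mu_{t},\Phi)$.

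Next, by Poincar\'e recurrence pick $n_{k}\to\infty$ with $d(T^{n_{k}}x,x)\to 0$. The Anosov closing property provides periodic points $p_{k}$ with $T^{n_{k}}p_{k}=p_{k}$ whose orbits shadow $x,Tx,\ldots,T^{n_{k}}x$ exponentially; in the symbolic setting this means $p_{k}$ and $x$ agree on every coordinate in $[0,n_{k})$ except possibly on a bounded window near each endpoint. Because the cocycle is locally constant, the products $\mathcal{A}_{n_{k}}(p_{k})$ and $\mathcal{A}_{n_{k}}(x)=:M_{k}$ differ only by multiplication by matrices of uniformly bounded operator norm and inverse-norm, hence $\tfrac{1}{n_{k}}\log\|\mathcal{A}_{n_{k}}(p_{k})\|\to \chi(\mu_{t},\Phi)$.

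The main obstacle is the final step: the periodic orbit measure $\nu_{p_{k}}$ has Lyapunov exponent $\chi(\nu_{p_{k}},\Phi)=\tfrac{1}{n_{k}}\log\rho(\mathcal{A}_{n_{k}}(p_{k}))$, so one must replace the norm by the spectral radius, which is a Berger--Wang-type identity. In the $GL(2,\R)$ setting stressed in the introduction I would argue geometrically: if $\rho(M_{k})\ll\|M_{k}\|$ then $M_{k}$ has a sharply defined expanding direction $E^{u}_{k}$ and contracting direction $E^{s}_{k}$ in $\R^{2}$, and by passing to a longer period $\ell n_{k}$ or concatenating $M_{k}$ with a short connecting block supplied by the shift combinatorics and the Anosov closing property, one can rotate $E^{u}_{k}$ into a cone uniformly transverse to $E^{s}_{k}$; a two-dimensional angle estimate then forces the spectral radius of the concatenated product to capture a definite fraction of its norm. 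Once this alignment is established, $\chi(\nu_{p_{k}},\Phi)\to \chi(\mu_{t},\Phi)$, and letting $\eta\to 0$ with a diagonal argument realises $\beta(\Phi)$ as a limit of Lyapunov exponents of measures supported on periodic orbits, as required.
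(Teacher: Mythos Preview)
Your argument and the paper's coincide through the shadowing step but diverge precisely at the obstacle you flag. The paper does not attempt any Berger--Wang alignment argument: having produced periodic measures $\mu_{p}$ converging weak$^{\ast}$ to an ergodic maximizing measure $\mu$, it invokes the Bocker--Viana continuity theorem for two-dimensional cocycles (Theorem~\ref{cont1}) to conclude directly that $\chi(\mu_{p},\Phi)\to\chi(\mu,\Phi)=\beta(\Phi)$. Continuity is applied to the Lyapunov exponent as a function of the measure, so the norm-versus-spectral-radius issue never arises. (The detour through equilibrium states and Theorem~\ref{main1} in your first paragraph is also unnecessary; the paper simply starts from an ergodic maximizing measure, whose existence is Lemma~\ref{compact1}.)

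Your proposed substitute for this step is a genuine gap rather than a routine detail. Concatenating $M_{k}$ with a ``short connecting block'' so as to push $E^{u}_{k}$ away from $E^{s}_{k}$ presupposes that such a block exists with a uniformly controlled effect on directions; on the full shift that amounts to an irreducibility hypothesis on $\{A_{1},\dots,A_{k}\}$ which you have not assumed---if all $A_{i}$ are simultaneously upper-triangular, no block rotates anything, and a separate (additive) argument is needed. Even in the irreducible case you would still have to quantify the angle estimate and verify that inserting the block does not destroy the norm growth at the exponential scale. Carrying this out is essentially reproving a Berger--Wang-type identity, and your sketch (``I would argue geometrically\ldots'') does not supply the mechanism. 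Without it, the passage from $\tfrac{1}{n_{k}}\log\|\mathcal{A}_{n_{k}}(p_{k})\|\to\chi(\mu_{t},\Phi)$ to control of $\chi(\nu_{p_{k}},\Phi)=\tfrac{1}{n_{k}}\log\rho(\mathcal{A}_{n_{k}}(p_{k}))$ is unjustified.
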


In general, Kalinin \cite{Ka} showed that for a H\"older continuous map $\mathcal{A}:X \rightarrow GL(d, \R)$, Lyapunov exponents can be approximated by Lyapunov exponents of measures supported on periodic orbits under an assumption slightly stronger than the Anosov closing property.

This paper is organized as follows. In Section 2, we recall some preliminary material regarding convex functions as well as some results in thermodynamic formalism for subadditive setting.
In Section 3, we prove Theorem \ref{main1}. In Section 4, we state a theorem about the continuity of Lyapunov exponents for locally constant cocycles, and we prove Theorem \ref{main3}.

\textbf{\textit{Acknowledgements.}}
The author thanks M. Rams for his careful reading of an earlier version of this paper and many helpful suggestions.  The author was partially supported by the National Science Center grant 2014/13/B/ST1/01033 (Poland).

\section{Preliminaries}
\subsection{Convex functions}
We first give some notation and basic facts in convex analysis.  For details, one is referred to \cite{HL}. 

Let $x, y \in\R^{n}$, the line segment connecting $x$ and $y$ is the set $[x, y]$ formally given by
\[ [x, y] =\{\beta x+ (1-\beta)y \hspace{0,2cm} \beta \in[0,1]\}.\]

We say that a set $X\subset \R^{n}$ is convex when for any two points $x, y\in X$, the line segment $[x, y]$ also belongs to the set $X$, i.e., $\beta x+ (1-\beta)y \in X$ for any $x, y\in X$ and $\beta\in(0,1).$ Let $C$ be  a  convex  subset  of $\R^n$.   A  point $x\in C$ is  called  an \textit{extreme point} of $C$ if whenever $x=\beta y+ (1-\beta)z$ for  some $y,z\in C$ and  $0< \beta<1$,  then $x=y=z$. We denote by $\ext(C)$ the  set  of extreme points of $C$. 

A function $f:\R^{n} \rightarrow \R$ is a convex function if its domain $\dom(f)$ is a convex set and for all $x_, y\in \dom(f)$ and $\beta \in (0,1)$, the following relation holds 
\[ f(\beta x+ (1-\beta) y))\leq \beta f(x) + (1-\beta)f(y).\]

In other words, a function $f:\R^{n} \rightarrow \R$ is convex when for every segment $[x_{1}, x_{2}]$, as the vector $x_{\beta}=\beta x_{1}+(1-\beta)x_{2}$ varies within the line segment $[x_{1}, x_{2}]$, the points $(x_{\beta}, f(x_{\beta}))$ on the graph $\{(x, f(x)) | x\in \R^{n}\}$ lie below the segment connecting $(x_{1}, f(x_{1}))$ and $(x_{2}, f(x_{2}))$, as illustrated in Figure \ref{fig:M1}.
\begin{figure}[ht]
\centering
  \begin{tikzpicture}

\draw (2,1) -- (8,1)    ;

\draw (2.5,0) -- (2.5,8)    ;
\coordinate (G) at (1.5,3.0);
\coordinate (R) at (2.0,5);
\coordinate (B) at (7.4,5);

\coordinate (D) at (8.2,5);
\draw    (R) to[out=-90,in=-95] (B) ;
\draw  (G) -- (D)node[pos=0.14,above]{\((x_{1}, f(x_{1}))\)}node[pos=0.87,above]{\((x_{2}, f(x_{2}))\)};

\coordinate [label=above:${(x_{\beta} ,f(x_{\beta})) }$] (C) at (4.9,3.45);
\node[point] at (C) {};

\end{tikzpicture}
    \caption{Convex line} \label{fig:M1}
    \label{tikz:f}
\end{figure}
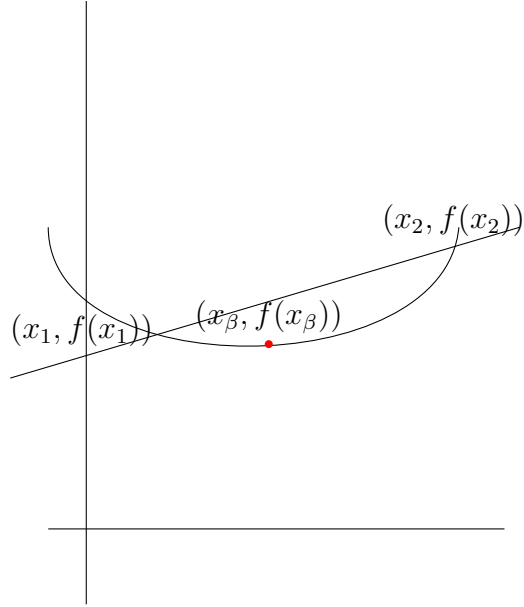

Let $U$ be an open convex subset of $\R^{n}$ and $f$ be a real continuous convex function on $U$. We say a vector $a\in \R^{n}$ is a \textit{subgradient} of $f$ at $x$ if for all $z \in U$, 
\[f(z)\geq f(x) +a^{T}(z-x),\]
where the right hand side is the scalar product.

For each $x\in \R^{n}$ set the \textit{subdifferential} of $f$ at a point $x$ to be
\[\partial f(x) :=\{a: a \hspace{0,1cm}\textrm{is a subgradient for} \hspace{0,1cm}f\hspace{0,1cm} \textrm{at}\hspace{0,1cm} x \}.\]

 For $x\in U$, the subdifferential $\partial f(x)$ is
 a nonempty convex compact set.  Define $\partial^{e} f(x) := \ext\{\partial f(x)\}$. In the case $n=1$, $\partial^{e} f(x)=\{f^{'}(x_{-}), f^{'}(x_{+})\}$, where $f^{'}(x_{-})$ (resp. $f^{'}(x_{+}))$ denotes the left (resp. right) derivative. We say that $f$ is \textit{differentiable} at $x$ when  $\partial^{e} f(x)=\{a\}$. 

We define 
\begin{equation}\label{dense}
\partial f(U)=\cup_{x \in U} \partial f(x) \hspace{0,2cm}\textrm{and}\hspace{0.2cm} \partial^{e} f(U)=\cup_{x \in U} \partial^{e} f(x).
\end{equation} 

In the case $n = 1$,  
Lebesgue's theorem on the differentiability of monotone functions says $\partial^{e} f$ is  differentiable almost everywhere. The case $n= 2$ was proven by H. Busemann and W. Feller \cite{BF}. The general case was settled by A. D. Alexandrov \cite{A}. The following result is well known (cf. \cite[Theorem 7.9]{S}).
\begin{thm}\label{incre1}Let $f$ be a continuous function defined on an open interval that has a derivative at each point of $\R$ except on a countable set, and $f^{'} \leq 0$ Lebesgue almost everywhere, then $f$ is a non-increasing function.
\end{thm}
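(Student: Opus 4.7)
The statement is a classical form of the Goldowsky--Tonelli theorem. My strategy is to prove, for every $a < b$ in the domain and every $\epsilon > 0$, the quantitative inequality
\[ f(b) - f(a) \leq \epsilon(b - a) + \epsilon; \]
letting $\epsilon \downarrow 0$ then gives $f(b) \leq f(a)$ for all such $a < b$, which is exactly the statement that $f$ is non-increasing. Fix $a < b$ and $\epsilon > 0$. Let $E$ denote the countable set where $f'$ fails to exist and enumerate $E \cap [a, b] = \{e_1, e_2, \ldots\}$.

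Introduce the auxiliary function
\[ G(x) := f(x) - f(a) - \epsilon(x - a) - \epsilon \sum_{\{n\, :\, e_n \leq x\}} 2^{-n}. \]
The linear term $-\epsilon(x-a)$ provides slope slack absorbing the almost-everywhere sign constraint on $f'$, while the final sum is a right-continuous non-decreasing jump function with jump $\epsilon 2^{-n}$ at each $e_n$ and total variation at most $\epsilon$, absorbing the countable exceptional set. Since $G(a) = 0$, the desired estimate reduces to $G(b) \leq 0$. I would prove this by a continuation argument: set $c := \sup\{x \in [a, b] : G(y) \leq 0 \text{ for all } y \in [a, x]\}$ and derive a contradiction from $c < b$. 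At such a $c$: if $c \in E$, the downward jump of $G$ at $c$ combined with the continuity of $f$ forces $G$ to remain strictly negative in a small right-neighborhood of $c$; and if $c \notin E$ with $f'(c) \leq 0$, then $G$ is differentiable at $c$ with $G'(c) = f'(c) - \epsilon \leq -\epsilon$, so $G$ is strictly decreasing past $c$. In either case the maximality of $c$ is contradicted.

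The main technical obstacle is the remaining case, $c \notin E$ with $f'(c) > 0$, so that $c$ lies in the Lebesgue-null set $N := \{x \notin E : f'(x) > 0\}$ permitted by the \emph{almost everywhere} hypothesis; here the pointwise derivative points the wrong way and the naive local argument fails. The classical resolution is to reinforce $G$ with an additional correction of the form $-M \cdot m(U \cap [a, x])$, where $U \supset N$ is open of arbitrarily small Lebesgue measure (available because $|N| = 0$) and $M$ is chosen large enough to dominate the contribution of $f$-oscillations across $U$; equivalently, one performs a Vitali-type covering of $N$ by short intervals and absorbs its total oscillation into $\epsilon$. With this modification the continuation goes through in the remaining case as well, $G(b) \leq 0$ is established, and letting $\epsilon \downarrow 0$ completes the proof. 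Since the paper cites this statement as Theorem~7.9 of the reference, one may alternatively simply invoke it as a classical fact.
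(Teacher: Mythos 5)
The paper does not actually prove this statement: it is quoted as a known classical fact with a pointer to Saks' \emph{Theory of the integral}, so your closing remark that one may simply invoke the reference is exactly what the paper does. Your attempted direct proof follows a reasonable classical route (linear slack $-\epsilon(x-a)$, a summable jump correction at the countable exceptional set $E$, and a continuation argument), and the cases $c\in E$ and $c\notin E$, $f'(c)\le 0$ do work; note only that $G$ need not be differentiable at $c\notin E$ (points of $E$ may accumulate at $c$), but since the jump term is non-increasing the one-sided estimate $G(y)-G(c)\le f(y)-f(c)-\epsilon(y-c)$ for $y>c$ is all you need, and $G(c)\le 0$ at the critical point follows from right-continuity of $G$ and upper semicontinuity from the left.

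The genuine gap is the case you yourself flag as the main obstacle, $c\in N:=\{x\notin E:\ f'(x)>0\}$, which is precisely where the almost-everywhere hypothesis must be used, and the fix you propose does not work as stated. Adding $-M\,m(U\cap[a,x])$ with a single constant $M$ and one open set $U\supset N$ of small measure requires, at the continuation step, that the correction decrease locally at rate at least $f'(c)$ for every $c\in N$; that forces $M\ge\sup_N f'$, and $f'$ need not be bounded on $N$. Nor can $M$ be chosen to ``dominate the oscillation of $f$ across $U$'': for a merely continuous $f$ the oscillation over an open set is not controlled by its measure, and the same unboundedness defeats the unweighted Vitali-covering variant (the interval lengths must be weighted against the size of $f'$). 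The standard repair is a stratification: write $N=\bigcup_k N_k$ with $N_k=\{x\in N:\ k-1<f'(x)\le k\}$, each Lebesgue-null, choose open sets $U_k\supset N_k$ with $m(U_k)<\epsilon 2^{-k}/(k+1)$, and subtract $\sum_k (k+1)\,m(U_k\cap[a,x])$ from $G$; the total cost is at most $\epsilon$, while at $c\in N_k$ one has $[c,y]\subset U_k$ for $y$ close to $c$, so the correction decreases at rate $k+1>f'(c)$ and the continuation closes. With that modification (and the routine check at $c=b$) your argument becomes a complete proof; without it, the crucial case is not actually handled, whereas the paper sidesteps the issue entirely by citation.
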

\subsection{Thermodynamic formalism for a subadditive potential}
 We require some elements from the subadditive thermodynamic formalism. The additive theory of thermodynamic
formalism extends to the subadditive theory with suitable generalizations. Let $(X,T)$ be a TDS and let $\Phi=\{\log \phi_{n}\}_{n=1}^{\infty}$ be a subadditive potential on $(X,T)$. 
 
 We introduce the \textit{topological pressure} of $\Phi$ as follows. The space $X$ is endowed with the metric $d$. For any $n\in \N$, one can define a new metric $d_{n}$ on $X$ by
\[ d_{n}(x, y)=\max\left\{ d(T^{k}(x), T^{k}(y)) : k=0,\ldots,n-1  \right\}.\]

For any $\epsilon>0$ a set $E \subset X$ is said to be a $(n,\epsilon)$-\textit{separated  subset}  of $X$ if $d_{n}(x,y)> \epsilon$ for any two different points $x,y \in E$.  We define for $\Phi$
\[ P_{n}(T, \Phi, \epsilon)=\sup \left\{\sum_{x\in E} \phi_{n}(x) : E \hspace{0,1cm}\textrm{is} \hspace{0,1cm}(n, \epsilon) \textrm{-separated subset of X} \right\}.\]
Since $P_{n}(T, \Phi, \epsilon)$ is a decreasing function of $\epsilon$, we define 
 \[P(T, \Phi, \epsilon)=\limsup_{n \rightarrow \infty} \frac{1}{n} \log P_{n}(T, \Phi, \epsilon),\] and
\[ P(T, \Phi)=\lim_{\epsilon \rightarrow 0}P(T, \Phi, \epsilon).\]

We call $P(T,\Phi)$ the topological pressure of $\Phi$. We define $h_{top}(T):=P(T, 1).$

Bowen \cite{B2} showed that for any H\"older continuous $\psi:X\rightarrow \R$ on a mixing hyperbolic system, there exists a unique equilibrium measure (which is also a Gibbs state) for $\psi$.

Feng and K\"aenm\"aki \cite{FK} extended the Bowen's result for the subadditive potentials $t\Phi$ on a locally constant cocycle under the assumption that the matrices in $\mathcal{A}$ do not preserve a common proper subspace of $\R^{d}$ (i.e. $(T, \mathcal{A})$ is irreducible).

Recently, Park \cite{P} showed the continuity of the topological pressure, and the uniqueness of the equilibrium measure for general cocycles under generic assumptions. In \cite{M} the continuity of the topological pressure was proven under some assumption which is weaker than Park's assumptions.

Let $(X, \tau, \mu)$ be a Borel probability space, and $T:X\rightarrow X$ be a measure preserving transformation.

 A \textit{partition} of $(X, \tau, \mu)$ is a subfamily of $\tau$ consisting of mutually disjoint elements whose union is $X$. We denote by $\alpha$ and $\beta$ the countable partition of $X$.
 
 Let $\alpha=\{A_i, i\geq1\}$, where $A_{i}\in \tau$ .  We define
 \[ H_{\mu}(\alpha)=-\sum_{A \in \alpha} \mu(A) \log \mu(A)\]
 to be the \textit{entropy} of $\alpha$  (with the convention 0$\log 0$ = 0).
 
 We denote by $\alpha \vee \beta$ the joint partition $\{A\cap B\hspace{0.1cm}|\hspace{0.1cm} A\in \alpha, B\in \beta \}.$\\
 Let $T^{-1}(\alpha)=\{T^{-1}(A) \hspace{0.1cm}|\hspace{0.1cm}A\in \alpha\}$. We define
 \[ h(\mu, \alpha)=\lim_{n\rightarrow \infty}\frac{1}{n} H_{\mu}(\bigvee_{j=0}^{n-1}T^{-1}(\alpha))\]
 to be the entropy of $T$ relative to $\alpha$ \footnote{Limits exist by subadditivity.}.
 
 Then the metric entropy of $\mu$ is defined as
 \[h_{\mu}(T)=\sup h(\mu, \alpha),\]
 where the supremum is taken over all countable partitions $\alpha$ with $H_{\mu}(\alpha)<\infty.$
 
  We can define the topological pressure by the following \textit{variational principle}. It was proved by Cao, Feng and Huang \cite{CFH}.
 \begin{thm}[{{\cite[Theorem 1.1]{CFH}}}]\label{var1} 
Let $(X,T)$ be a TDS such that $h_{top}(T)<\infty$. Suppose that $\Phi=\{\log\phi_{n}\}_{n=1}^{\infty}$ is a subadditive potential on the compact metric space $X$. Then
\[ P(T, \Phi)=\sup\{h_{\mu}(T)+\chi(\mu, \Phi) \]
\[: \mu \in \mathcal{M}(X,T) , \chi(\mu, \Phi) \neq -\infty \}.\]
\end{thm}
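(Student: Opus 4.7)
The plan is to combine three ingredients: the existence of a Lyapunov-maximizing measure (from Theorem \ref{main1}), the denseness of periodic-orbit measures in $\mathcal{M}(X,T)$ furnished by the Anosov closing property, and a continuity result for Lyapunov exponents of locally constant $GL(2,\R)$-cocycles (to be stated in Section 4).

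First I would apply Theorem \ref{main1} to the shift $(X,T)$. Since the shift on $\{1,\dots,k\}^{\Z}$ has upper semi-continuous entropy map and finite topological entropy $\log k$, there is a weak-$*$ accumulation point $\mu^{*}$ of the equilibrium measures $(\mu_{t})_{t>0}$ with $\chi(\mu^{*},\Phi)=\beta(\Phi)$; passing to an ergodic component (which preserves $\chi$ by its affinity in $\mu$) I may assume $\mu^{*}$ is ergodic. Second, using the Anosov closing property, for each large $N$ I would take a $\mu^{*}$-generic orbit segment of length $N$ and close it up to obtain a periodic point $p_{N}$ with $T^{N}(p_{N})=p_{N}$ whose orbit empirical measure $\mu_{p_{N}}$ is weak-$*$ close to $\mu^{*}$. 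Since $\mu^{*}$ is ergodic, a Birkhoff-genericity argument gives $\mu_{p_{N}}\to\mu^{*}$ in the weak-$*$ topology.

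The heart of the argument is then the convergence $\chi(\mu_{p_{N}},\Phi)\to\chi(\mu^{*},\Phi)=\beta(\Phi)$. The upper bound $\limsup_{N}\chi(\mu_{p_{N}},\Phi)\le\chi(\mu^{*},\Phi)$ is automatic: by subadditivity one has
\[
\chi(\mu,\Phi)=\inf_{n}\frac{1}{n}\int\log\phi_{n}\,d\mu,
\]
an infimum of weak-$*$ continuous affine functionals on $\mathcal{M}(X,T)$, hence upper semi-continuous in $\mu$. The matching lower bound $\liminf_{N}\chi(\mu_{p_{N}},\Phi)\ge\chi(\mu^{*},\Phi)$ is supplied by the promised Section 4 continuity theorem for Lyapunov exponents of locally constant two-dimensional cocycles. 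Combining the two inequalities yields a sequence of periodic-orbit Lyapunov exponents converging to $\beta(\Phi)$.

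The main obstacle is precisely the lower semi-continuity step. Lyapunov exponents are notoriously discontinuous in general (Bochi, Furstenberg-type examples), and the hypotheses that $\mathcal{A}$ takes values in $GL(2,\R)$ and that the cocycle is locally constant are exactly what allows the continuity input to be available. The proof of the continuity theorem itself will rely on the explicit finite-matrix structure together with an Oseledets-style decomposition and the exponential shadowing provided by the Anosov closing property, which transfers control from $\|\mathcal{A}_{n}(x)\|$ along a generic orbit of $\mu^{*}$ to $\|\mathcal{A}_{n}(p_{N})\|$ along the closed-up periodic orbit.
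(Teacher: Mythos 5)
Your proposal does not address the statement at all. The statement is the subadditive variational principle of Cao--Feng--Huang: the identity $P(T,\Phi)=\sup\{h_{\mu}(T)+\chi(\mu,\Phi):\mu\in\mathcal{M}(X,T),\ \chi(\mu,\Phi)\neq-\infty\}$ for an arbitrary TDS with $h_{top}(T)<\infty$ and an arbitrary subadditive potential, where $P(T,\Phi)$ is the pressure defined through $(n,\epsilon)$-separated sets. What you outline instead is a proof of the periodic-orbit approximation of the maximal Lyapunov exponent for a locally constant $GL(2,\R)$-cocycle satisfying the Anosov closing property --- that is, Theorem \ref{main3} of the paper, not Theorem \ref{var1}. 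Nowhere do you estimate the topological pressure, prove either of the two inequalities $P(T,\Phi)\ge h_{\mu}(T)+\chi(\mu,\Phi)$ or $P(T,\Phi)\le\sup(\cdots)$, or even mention $P(T,\Phi)$; and the extra structure you invoke (shift space, matrix cocycle, two-dimensionality, Anosov closing, continuity of Lyapunov exponents) is simply not available under the hypotheses of the statement.

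There is also a circularity problem: you appeal to Theorem \ref{main1}, but the proof of Theorem \ref{main1} in this paper rests on the variational principle itself (through Theorem \ref{max1}, Proposition \ref{eq11}, and explicitly in Lemma \ref{zero2}, whose last step ``follows from the variational principle''), so it cannot be used as an ingredient in a proof of Theorem \ref{var1}. Note that the paper does not reprove this theorem either --- it is quoted from \cite[Theorem 1.1]{CFH}. An actual proof requires the subadditive pressure machinery of that reference: the lower bound comes from comparing $\sum_{x\in E}\phi_{n}(x)$ over separated sets with $h_{\mu}(T)+\chi(\mu,\Phi)$ via Katok/Misiurewicz-type entropy estimates, and the upper bound from constructing invariant measures as weak$^{*}$ limits of averaged measures supported on maximizing separated sets (in the spirit of Theorem \ref{con1}). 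None of this appears in your proposal, so it does not constitute a proof of the stated result.
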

For $t\in \R_{+}$, let us denote $P(T, t\Phi)=P(t).$ 
\begin{thm}[{{\cite[Theorem 1.2]{FH}}}] \label{max1}
Let $(X,T)$ be a TDS such that $h_{top}(T)<\infty$. Assume that $\Phi=\{\log \phi_{n}\}_{n=1}^{\infty}$ is a  subadditive potential on the compact metric space $X$ which satisfies $\beta(\Phi)>-\infty$ .

   Then the pressure function $P(t)$ is  a  continuous real convex function on $(0,\infty)$. Furthermore, $P^{'}(\infty) := \lim_{t \rightarrow \infty} \frac{P(t)}{t}=\beta(\Phi)$.
\end{thm}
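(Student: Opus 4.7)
The plan is to rely entirely on the variational principle of Theorem \ref{var1} together with the identity $\beta(\Phi)=\sup_{\mu}\chi(\mu,\Phi)$ from (\ref{invariant1}). Write
\[
P(t)=\sup\bigl\{h_{\mu}(T)+t\,\chi(\mu,\Phi):\mu\in\mathcal{M}(X,T),\ \chi(\mu,\Phi)\neq-\infty\bigr\}.
\]
For each fixed $\mu$ with $\chi(\mu,\Phi)\neq-\infty$, the map $t\mapsto h_{\mu}(T)+t\,\chi(\mu,\Phi)$ is affine. A pointwise supremum of affine functions is convex, so $P$ is convex on $(0,\infty)$. Finiteness is immediate from the estimate $P(t)\leq h_{top}(T)+t\,\beta(\Phi)<\infty$ (using $h_{top}(T)<\infty$ and $\beta(\Phi)<\infty$, the latter because $\phi_{n}$ is continuous on a compact set). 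A finite convex function on an open interval of $\R$ is automatically continuous, which proves the first assertion.

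For the identity $\lim_{t\to\infty}P(t)/t=\beta(\Phi)$ I would prove two matching inequalities. For the upper bound, divide the variational principle by $t$ to get
\[
\frac{P(t)}{t}\leq \frac{h_{top}(T)}{t}+\sup_{\mu}\chi(\mu,\Phi)=\frac{h_{top}(T)}{t}+\beta(\Phi),
\]
where the last equality uses (\ref{invariant1}); letting $t\to\infty$ gives $\limsup P(t)/t\leq\beta(\Phi)$. For the lower bound, fix $\epsilon>0$ and, using $\beta(\Phi)>-\infty$ together with (\ref{invariant1}), pick an invariant measure $\mu_{\epsilon}$ with $\chi(\mu_{\epsilon},\Phi)>\beta(\Phi)-\epsilon$; in particular $\chi(\mu_{\epsilon},\Phi)\neq-\infty$, so $\mu_{\epsilon}$ is admissible in Theorem \ref{var1}. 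Since $h_{\mu_{\epsilon}}(T)\geq 0$, the variational principle yields
\[
\frac{P(t)}{t}\geq \frac{h_{\mu_{\epsilon}}(T)}{t}+\chi(\mu_{\epsilon},\Phi)\geq \beta(\Phi)-\epsilon
\]
for every $t>0$, hence $\liminf P(t)/t\geq\beta(\Phi)-\epsilon$. Sending $\epsilon\to 0$ closes the argument.

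The only genuinely delicate point is the lower bound: one must avoid the pathological measures on which $\chi(\mu,\Phi)=-\infty$ and are therefore excluded from the variational principle. The hypothesis $\beta(\Phi)>-\infty$ is exactly what guarantees the existence of a sequence of admissible near-maximizers, so it is used essentially here. Everything else (convexity, automatic continuity, the routine upper bound using $h_{top}(T)<\infty$) is soft and follows from standard convex analysis once the variational principle of Cao–Feng–Huang is in hand.
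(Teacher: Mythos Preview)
Your argument is correct. Note, however, that the paper does not give its own proof of this statement: Theorem~\ref{max1} is quoted verbatim from \cite[Theorem 1.2]{FH} and invoked as a black box, so there is no in-paper proof to compare against. Your route via the variational principle (supremum of affine functions for convexity, the sandwich $0\le h_\mu(T)\le h_{top}(T)$ for the two bounds on $P(t)/t$) is exactly the standard one and matches the approach in the cited reference.
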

Let $t\in \R_{+}$, we denote by $\Eq(t)$ the collection of invariant measures $\mu$ such that 
\[ h_{\mu}(T)+t.\chi(\mu, \Phi)= P(t).\]

If $\Eq(t)\neq \emptyset$, then each element $\Eq(t)$ is called an $\textit{equilibrium state}$ for $t\Phi$. 

\begin{prop}[{{\cite[Theorem 3.3]{FH}}}]\label{eq11}
Let $(X, T)$ be a TDS such that the  entropy  map $\mu\mapsto h_{\mu}(T)
$ is upper semi-continuous and $h_{top}(T)<\infty$. Suppose that  $\Phi=\{\log \phi_{n}\}_{n=1}^{\infty}$ is a subadditive potential on the compact metric space $X$ which satisfies $\beta(\Phi)>-\infty$. For any $t>0$, $\Eq(t)$ is a non-empty  compact  convex  subset  of $\mathcal{M}(X,T)$, and every extreme point of $\Eq(t)$ is an ergodic  measure. Moreover,
\[\partial P(t) =\{\chi(\mu_{t}, \Phi) : \mu_{t}\in \Eq(t)\}.\]
\end{prop}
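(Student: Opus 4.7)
The plan is to combine the variational principle (Theorem \ref{var1}) with classical convex analysis applied to the pressure function $P$, which is convex on $(0,\infty)$ by Theorem \ref{max1}. Two structural properties of the functional $\mu\mapsto\chi(\mu,\Phi)$ drive everything. First, integrating the subadditivity inequality against the $T$-invariant measure $\mu$ shows that $a_n(\mu):=\int\log\phi_n\,d\mu$ is subadditive in $n$, so Fekete's lemma yields
\[
\chi(\mu,\Phi)=\inf_{n\geq 1}\tfrac{1}{n}\int\log\phi_n\,d\mu.
\]
Since each $\phi_n$ is continuous and non-negative, $\log\phi_n$ is upper semi-continuous and bounded above on the compact space $X$, hence $\mu\mapsto\tfrac{1}{n}\int\log\phi_n\,d\mu$ is upper semi-continuous in the weak$^{\ast}$ topology, and so is the infimum $\chi(\cdot,\Phi)$. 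Second, each $\mu\mapsto\tfrac{1}{n}a_n(\mu)$ is affine and the limit exists on all of $\mathcal{M}(X,T)$, so $\chi(\cdot,\Phi)$ is affine.

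With these two properties the first three assertions are essentially automatic. The map $\mu\mapsto h_{\mu}(T)+t\chi(\mu,\Phi)$ is a sum of upper semi-continuous functions on the compact space $\mathcal{M}(X,T)$, so it attains its supremum $P(t)$ by Theorem \ref{var1}, giving $\Eq(t)\neq\emptyset$; its upper level set $\Eq(t)$ is closed and thus compact; affinity of $h$ and of $\chi(\cdot,\Phi)$ makes this super-level set convex. For extremality, suppose $\mu\in\Eq(t)$ writes as $\lambda\nu_1+(1-\lambda)\nu_2$ with $\nu_i\in\mathcal{M}(X,T)$ and $\lambda\in(0,1)$. Affinity gives $P(t)=h_{\mu}+t\chi(\mu,\Phi)=\lambda(h_{\nu_1}+t\chi(\nu_1,\Phi))+(1-\lambda)(h_{\nu_2}+t\chi(\nu_2,\Phi))$, and the variational bound $h_{\nu_i}+t\chi(\nu_i,\Phi)\leq P(t)$ forces both $\nu_i\in\Eq(t)$. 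Then extremality of $\mu$ inside $\Eq(t)$ yields $\nu_1=\nu_2=\mu$, so $\mu$ is extreme in $\mathcal{M}(X,T)$, i.e.\ ergodic.

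For the subdifferential formula, the inclusion $\{\chi(\mu_t,\Phi):\mu_t\in\Eq(t)\}\subseteq\partial P(t)$ follows because the affine function $s\mapsto h_{\mu_t}(T)+s\chi(\mu_t,\Phi)$ agrees with $P$ at $s=t$ and, by the variational principle applied at each $s$, lies below $P$ everywhere; hence its slope $\chi(\mu_t,\Phi)$ is a subgradient. For the converse, recall $\partial P(t)=[P'(t_-),P'(t_+)]$, and note that $\{\chi(\mu_t,\Phi):\mu_t\in\Eq(t)\}$ is convex as the affine image of the convex set $\Eq(t)$; so it suffices to produce equilibrium states realizing the two endpoints. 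To realize $P'(t_+)$, pick $t_n\downarrow t$ and $\mu_{t_n}\in\Eq(t_n)$; by monotonicity of subdifferentials of the convex function $P$, $\chi(\mu_{t_n},\Phi)\to P'(t_+)$. Passing to a subsequence, still denoted $t_n$, along which $\mu_{t_n}\to\mu$ weak$^{\ast}$, continuity of $P$ on $(0,\infty)$ gives $h_{\mu_{t_n}}=P(t_n)-t_n\chi(\mu_{t_n},\Phi)\to P(t)-tP'(t_+)$. Upper semi-continuity of entropy then gives $h_{\mu}\geq P(t)-tP'(t_+)$, while upper semi-continuity of $\chi(\cdot,\Phi)$ gives $\chi(\mu,\Phi)\geq P'(t_+)$. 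Adding these yields $h_{\mu}+t\chi(\mu,\Phi)\geq P(t)$, which must be equality by the variational principle, and equality in the sum forces equality in each summand. So $\mu\in\Eq(t)$ and $\chi(\mu,\Phi)=P'(t_+)$. The case of $P'(t_-)$ is symmetric with $t_n\uparrow t$.

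The main obstacle is the last step: we only have \emph{upper} semi-continuity (not continuity) of $\chi(\cdot,\Phi)$ and of entropy, so we cannot pass either quantity through weak$^{\ast}$ limits in isolation. The trick is to exploit the two-sided rigidity of the sum $h+t\chi$, whose total value is pinned down to $P(t)$ by continuity of $P$, together with the one-sided monotonicity of subgradients of a convex function, to squeeze both semi-continuous inequalities into equalities simultaneously.
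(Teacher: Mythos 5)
Your argument is correct. Note, however, that the paper itself gives no proof of Proposition \ref{eq11}: it is quoted verbatim from Feng and Huang \cite[Theorem 3.3]{FH}, so the only meaningful comparison is with their original argument, and your route is essentially that one. The two pillars you isolate --- that $\chi(\cdot,\Phi)=\inf_n \frac1n\int\log\phi_n\,d\mu$ is affine and upper semi-continuous (since each $\log\phi_n$ is u.s.c.\ and bounded above), and the variational principle of Theorem \ref{var1} --- are exactly what make $\Eq(t)$ a nonempty compact convex face of $\mathcal{M}(X,T)$ whose extreme points are ergodic, and your endpoint-realization argument for $\partial P(t)=[P'(t_-),P'(t_+)]$ (taking $t_n\downarrow t$ resp.\ $t_n\uparrow t$, using continuity of $P$, u.s.c.\ of entropy and of $\chi(\cdot,\Phi)$, and the rigidity of the sum $h+t\chi$ at the value $P(t)$) is the standard mechanism, correctly executed. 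The minor points worth making explicit are that $\beta(\Phi)>-\infty$ and $h_{top}(T)<\infty$ guarantee $-\infty<P(s)<\infty$ for all $s>0$ (so measures with $\chi(\mu,\Phi)=-\infty$ are harmless both in the attainment of the supremum and in the affinity computations), and that the one-sided inclusion $\chi(\mu_t,\Phi)\in\partial P(t)$ uses the variational principle for the potential $s\Phi$ together with $\chi(\mu,s\Phi)=s\,\chi(\mu,\Phi)$; both are easy and you implicitly use them correctly.
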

\begin{thm}[{{\cite[Proposition 3.2]{FH}}}]\label{interval}
Suppose that $\Phi=\{\log \phi_{n}\}_{n=1}^{\infty}$ is a subadditive potential on a TDS $(X, T)$. Assume that $h_{top}(T)<\infty$ and $\beta(\Phi)>-\infty.$ Then
\[\partial P(\R_{+})\subseteq (-\infty, \beta(\Phi)],\]
where $\partial P(\R_{+})$ defined in (\ref{dense}).
\end{thm}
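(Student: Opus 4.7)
The plan is to exploit the convexity of $P(t)$ together with the asymptotic slope identity $P'(\infty)=\beta(\Phi)$ given by Theorem \ref{max1}. The statement to prove is essentially that the slope of any supporting line to $P$ on $\R_+$ cannot exceed the eventual slope of $P$ at infinity, which is a standard convex-analysis fact.

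First I would fix an arbitrary $a\in\partial P(\R_+)$ and choose $t>0$ with $a\in\partial P(t)$. By the definition of subgradient, for every $s>0$ we have the inequality
\[
P(s)\;\geq\;P(t)+a\,(s-t).
\]
Since $P(t)$ is finite (it is a real convex function on $(0,\infty)$ by Theorem \ref{max1}) and $t$ is fixed, dividing this inequality by $s$ yields
\[
\frac{P(s)}{s}\;\geq\;\frac{P(t)}{s}+a\left(1-\frac{t}{s}\right).
\]

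Next I would let $s\to\infty$. The left-hand side tends to $P'(\infty)=\beta(\Phi)$ by Theorem \ref{max1}, while the right-hand side tends to $a$. Therefore $a\leq\beta(\Phi)$. As $a$ was an arbitrary element of $\partial P(\R_+)$, this establishes the desired inclusion $\partial P(\R_+)\subseteq(-\infty,\beta(\Phi)]$.

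There is essentially no obstacle here: the result is a direct consequence of the convexity of $P$ and the computation of its asymptotic slope. As a sanity check, one could alternatively argue via Proposition \ref{eq11}, which identifies $\partial P(t)$ with the set of Lyapunov exponents $\chi(\mu_t,\Phi)$ of equilibrium measures $\mu_t\in\Eq(t)$; these are automatically bounded above by $\beta(\Phi)$ thanks to the variational formula \eqref{invariant1}. The convexity proof above is cleaner because it avoids invoking the (nontrivial) existence and description of equilibrium measures.
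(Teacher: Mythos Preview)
Your argument is correct: the subgradient inequality combined with the asymptotic slope $P'(\infty)=\beta(\Phi)$ from Theorem \ref{max1} immediately gives the bound, and this is the natural convex-analysis proof. The paper itself does not supply a proof of this statement; it is quoted without proof as \cite[Proposition 3.2]{FH}, so there is nothing to compare your approach against beyond noting that your derivation is self-contained within the framework the paper has already set up.

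One small caveat on your closing remark: the alternative route via Proposition \ref{eq11} would not apply under the hypotheses of Theorem \ref{interval} as stated, since Proposition \ref{eq11} additionally assumes upper semi-continuity of the entropy map, which Theorem \ref{interval} does not. Your main convexity argument avoids this issue entirely, which is another reason it is the right proof here.
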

We denote by $\mathcal{M}(X)$ the space of all Borel probability measure on X with weak$^{\ast}$ topology.
\begin{thm}[{{\cite[Lemma 2.3]{CFH}}}] \label{con1}
Suppose $\{\nu_{n}\}_{n=1}^{\infty}$ is a sequence in $\mathcal{M}(X)$ and $\Phi=\{\log \phi_{n}\}_{n=1}^{\infty}$ is a subadditive potential on a TDS $(X, T)$.  We form the new sequence $\{\mu_{n}\}_{n=1}^{\infty}$ by $\mu_{n}=\frac{1}{n}\sum_{i=0}^{n-1}\nu_{n}oT^{i}$.  Assume that $\mu_{n_{i}}$ converges to $\mu$ in $\mathcal{M}(X)$ for some subsequence $\{n_i\}$ of natural numbers.  Then $\mu\in \mathcal{M}(X,T)$ and 
\begin{equation}\label{add1}
\limsup_{i \rightarrow \infty} \frac{1}{n_{i}}\int \log \phi_{n_{i}}(x)d\nu_{i}(x)\leq \chi (\mu, \Phi).
\end{equation} 
\end{thm}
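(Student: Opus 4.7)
The plan is to first check $\mu\in\mathcal{M}(X,T)$ via a Krylov--Bogolyubov computation, then prove the integral inequality by combining an iterated subadditive decomposition of $\log\phi_n$ with the weak$^{\ast}$ upper semicontinuity of $\nu\mapsto\int\log\phi_k\,d\nu$ for each fixed $k$. For invariance, observe that for any $f\in C(X)$,
\[
\Bigl|\int f\,d(\mu_n\circ T^{-1})-\int f\,d\mu_n\Bigr|=\frac{1}{n}\Bigl|\int f\circ T^{n}\,d\nu_n-\int f\,d\nu_n\Bigr|\le\frac{2\|f\|_\infty}{n}\to 0,
\]
so passing to the weak$^{\ast}$ limit along $n_i$ gives $\mu\circ T^{-1}=\mu$.

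For the core estimate, fix $k\in\N$. For each shift $\ell\in\{0,\ldots,k-1\}$, writing $n-\ell=q_\ell k+r_\ell$ with $0\le r_\ell<k$ and iterating $\phi_{a+b}(x)\le\phi_a(x)\phi_b(T^ax)$ yields
\[
\log\phi_n(x)\le\log\phi_\ell(x)+\sum_{j=0}^{q_\ell-1}\log\phi_k(T^{\ell+jk}x)+\log\phi_{r_\ell}(T^{\ell+q_\ell k}x)
\]
(with the convention $\phi_0\equiv 1$). Summing over $\ell$, the middle double sum reassembles to $\sum_{i=0}^{n-k}\log\phi_k(T^ix)$, because each $i\in[0,n-k]$ has the unique representation $i=\ell+jk$ with $\ell\in[0,k)$ and $j\in[0,q_\ell)$. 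The at most $2k$ boundary terms are evaluations of $\log\phi_j$ for $0\le j<k$, each bounded above by the finite constant $K_k:=\max_{0\le j<k}\sup_X\log\phi_j$. Therefore $k\log\phi_n(x)\le 2kK_k+\sum_{i=0}^{n-k}\log\phi_k(T^ix)$.

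Integrating against $\nu_{n_i}$, dividing by $n_ik$, and setting $\tilde\mu_n:=\frac{1}{n-k+1}\sum_{i=0}^{n-k}\nu_n\circ T^i$, one obtains
\[
\frac{1}{n_i}\int\log\phi_{n_i}\,d\nu_{n_i}\le\frac{2K_k}{n_i}+\frac{n_i-k+1}{n_ik}\int\log\phi_k\,d\tilde\mu_{n_i}.
\]
Since $\tilde\mu_{n_i}-\mu_{n_i}$ has total variation $O(k/n_i)$, $\tilde\mu_{n_i}\to\mu$ weak$^{\ast}$ along the same subsequence. The function $\log\phi_k$ is upper semicontinuous on $X$ and bounded above by $K_k$, so $\nu\mapsto\int\log\phi_k\,d\nu$ is weak$^{\ast}$ upper semicontinuous on $\mathcal{M}(X)$. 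Combined with $(n_i-k+1)/(n_ik)\to 1/k>0$, taking $\limsup_{i\to\infty}$ gives $\limsup_{i\to\infty}\frac{1}{n_i}\int\log\phi_{n_i}\,d\nu_{n_i}\le\frac{1}{k}\int\log\phi_k\,d\mu$. Since this holds for every $k\ge 1$, taking the infimum over $k$ on the right and using Fekete's lemma for the subadditive sequence $k\mapsto\int\log\phi_k\,d\mu$ identifies the infimum as $\chi(\mu,\Phi)$, which closes the estimate.

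The only technical subtlety is that $\log\phi_k$ may take the value $-\infty$ on a set of positive measure, making $\int\log\phi_k\,d\tilde\mu_{n_i}$ possibly equal to $-\infty$. This is harmless: upper semicontinuity against weak$^{\ast}$ limits only requires an upper envelope (supplied by $K_k<\infty$), and the elementary identity $\limsup_i a_ib_i=(1/k)\limsup_i b_i$ holds whenever $a_i\to 1/k>0$ and $b_i\in[-\infty,K_k]$, so the degenerate case $\chi(\mu,\Phi)=-\infty$ is handled cleanly by the same argument.
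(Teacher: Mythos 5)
Your argument is correct: the Krylov--Bogolyubov invariance check, the block decomposition of $\log\phi_n$ over the shifts $\ell\in\{0,\dots,k-1\}$ with the boundary terms controlled by $K_k<\infty$, the weak$^{\ast}$ upper semicontinuity of $\nu\mapsto\int\log\phi_k\,d\nu$ for the bounded-above upper semicontinuous function $\log\phi_k$, and the final Fekete step are all sound, including the careful handling of possible $-\infty$ values. Note that the paper does not prove this statement itself --- it quotes it as \cite[Lemma 2.3]{CFH} --- and your proof is essentially the standard argument given in that reference.
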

\section{Proof of the Theorem \ref{main1}}
We start the proof of Theorem \ref{main1} $(i)$ with the key Proposition \ref{eq11} which tells us that the subderivative of the topological pressure for a subadditive potential is equal to the Lyapunov exponent of the equilibrium state. Let $(X,T)$ be a TDS such that the  entropy  map $\mu\mapsto h_{\mu}(T)
$ is upper semi-continuous and $h_{top}(T)<\infty$. Suppose that $\Phi=\{\log \Phi_{n}\}_{n=1}^{\infty}$ is a subadditive potential on the compact metric space $X$ which satisfies $\beta(\Phi)>-\infty$. We write Theorem \ref{main1} $(i)$ as follows. 
\begin {thm}\label{zero1}
 For each $t>0$, the family of equilibrium measures $(\mu_{t})$, has a weak$^{\ast}$ accumulation point $\mu$ as $t \rightarrow \infty.$ Any such accumulation point is a Lyapunov maximizing measure for $\Phi$. Moreover,
\[ \chi(\mu, \Phi)=\lim_{t\rightarrow \infty} \chi(\mu_{t}, \Phi).\]

\end{thm}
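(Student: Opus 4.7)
The plan is to combine three ingredients: (a) weak$^{\ast}$ compactness of $\mathcal{M}(X,T)$ to produce an accumulation point, (b) convex analysis of the pressure function $P(t)$, via Proposition \ref{eq11}, Theorem \ref{max1} and Theorem \ref{interval}, to show that $\lim_{t\to\infty}\chi(\mu_t,\Phi)$ exists and equals $\beta(\Phi)$, and (c) upper semi-continuity of $\nu\mapsto \chi(\nu,\Phi)$ on $\mathcal{M}(X,T)$ to identify $\chi(\mu,\Phi)$ with this limit.

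First I would fix a sequence $t_n\to\infty$ and, using that $\mathcal{M}(X)$ is weak$^{\ast}$ compact and $\mathcal{M}(X,T)$ is weak$^{\ast}$ closed, extract a subsequence (still denoted $t_n$) with $\mu_{t_n}\to\mu\in\mathcal{M}(X,T)$. By Proposition \ref{eq11} each $\chi(\mu_t,\Phi)$ belongs to $\partial P(t)$; convexity of $P$ on $(0,\infty)$ (Theorem \ref{max1}) forces the subderivatives to be non-decreasing in $t$, while Theorem \ref{interval} bounds them above by $\beta(\Phi)$. Thus $L:=\lim_{t\to\infty}\chi(\mu_t,\Phi)$ exists and satisfies $L\leq \beta(\Phi)$.

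To upgrade this to $L=\beta(\Phi)$, I would apply the standard convex supporting line inequality: for any fixed $s>0$ and any $t>s$,
\[\chi(\mu_t,\Phi)\;\geq\;\frac{P(t)-P(s)}{t-s}.\]
Letting $t\to\infty$ with $s$ fixed and using $P(t)/t\to\beta(\Phi)$ from Theorem \ref{max1}, the right hand side tends to $\beta(\Phi)$, giving $L\geq\beta(\Phi)$, hence $L=\beta(\Phi)$. This already shows the convergence part of the statement.

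It remains to prove $\chi(\mu,\Phi)=\beta(\Phi)$, i.e.\ that $\mu$ is maximizing. The key tool is upper semi-continuity of $\nu\mapsto\chi(\nu,\Phi)$ on $\mathcal{M}(X,T)$: subadditivity together with $T$-invariance yields, for every invariant $\nu$,
\[\chi(\nu,\Phi)\;=\;\inf_{k\geq 1}\frac{1}{k}\int\log\phi_k\,d\nu,\]
and each functional $\nu\mapsto\frac{1}{k}\int\log\phi_k\,d\nu$ is weak$^{\ast}$ upper semi-continuous because $\log\phi_k$ is bounded above and upper semi-continuous on $X$. Pointwise infima of upper semi-continuous functions are upper semi-continuous, so $\limsup_n \chi(\mu_{t_n},\Phi)\leq \chi(\mu,\Phi)$; combined with $L=\beta(\Phi)$ and the trivial inequality $\chi(\mu,\Phi)\leq\beta(\Phi)$ from \eqref{invariant1}, this forces $\chi(\mu,\Phi)=\beta(\Phi)$. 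The main obstacle is this last step, because $\log\phi_k$ takes the value $-\infty$ where $\phi_k$ vanishes and therefore fails to be continuous; this requires the upper semi-continuous form of the portmanteau theorem rather than the usual weak$^{\ast}$ convergence of integrals of continuous functions. An equivalent and perhaps cleaner route is to apply Theorem \ref{con1} directly to $\nu_n:=\mu_{t_n}$, since invariance of the equilibrium measures makes the Cesàro average coincide with $\mu_{t_n}$ itself.
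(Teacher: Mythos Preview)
Your proposal is correct and covers all the required conclusions, but the logical route differs from the paper's. The paper first establishes $\lim_{t\to\infty}\chi(\mu_t,\Phi)=\chi(\mu,\Phi)$ by combining upper semi-continuity of the Lyapunov exponent (for the inequality $\leq$) with the equilibrium state inequality $\chi(\mu_t,\Phi)+h_{\mu_t}(T)/t\geq \chi(\mu,\Phi)+h_\mu(T)/t$ (for the inequality $\geq$ after letting $t\to\infty$); only afterwards does it prove that $\mu$ is maximizing, via a contradiction argument: if some $\nu$ had $\chi(\nu,\Phi)>\chi(\mu,\Phi)$, then the affine map $t\mapsto h_\nu(T)+t\chi(\nu,\Phi)$ would eventually exceed $P(t)$, violating the variational principle. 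You instead go straight for $L=\beta(\Phi)$ using the secant/supporting-line inequality $\chi(\mu_t,\Phi)\geq (P(t)-P(s))/(t-s)$ together with $P(t)/t\to\beta(\Phi)$ from Theorem~\ref{max1}, and then sandwich $\chi(\mu,\Phi)$ between $L$ (via upper semi-continuity) and $\beta(\Phi)$ (via \eqref{invariant1}). Your approach is a bit more economical: it never invokes the equilibrium inequality against the limit measure $\mu$ and it replaces the contradiction argument by a direct convex-analytic computation. The paper's route, on the other hand, makes the role of the variational principle more visible and does not need the asymptotic slope $P(t)/t\to\beta(\Phi)$ explicitly. Your remark that Theorem~\ref{con1} applied with $\nu_n=\mu_{t_n}$ yields the needed upper semi-continuity is a clean way to handle the possible $-\infty$ values of $\log\phi_k$ and matches how the paper treats this issue elsewhere.
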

\begin{proof}
It is obvious that $(\mu_{t})$ has at least one accumulation point, let us call it $\mu$. By Theorem $\ref{max1}$, $P(t)$ is convex , then we have $\partial P(t)=\{\chi(\mu_{t}, \Phi)\}$ by Proposition \ref{eq11}. Moreover, since $P(t)$ is convex for $t>0$, $t \mapsto \chi(\mu_{t}, \Phi)$ is non-decreasing and bounded above\footnote{This follows from subadditivity.}.
\\
It follows that 
\[ \lim_{t \rightarrow \infty} \partial P(t)=\lim_{t \rightarrow \infty} \chi(\mu_{t}, \Phi) \hspace{0.2cm} \textrm{exists and is finite}.\]
Since Lyapunov exponents are upper semi-continuous
\[\lim_{t \rightarrow \infty} \chi(\mu_{t}, \Phi) \leq \chi(\mu, \Phi) .\]
By the definition of $\Eq(t)$, 
\begin{equation}\label{1side}
 \chi(\mu_{t}, \Phi) +\frac{h_{\mu_{t}}(T)}{t} \geq \chi(\mu, \Phi) +\frac{h_{\mu}(T)}{t}.
\end{equation} 

Since the TDS $(X,T)$ has finite topological entropy, so when $t\rightarrow \infty$,  (\ref{1side}) implies
\[\lim_{t \rightarrow \infty} \chi(\mu_{t}, \Phi) \geq \chi(\mu, \Phi) .\]

Now, we shall show that $\mu$ is a Lyapunov maximizing measure.

 By contradiction, let us assume that there exists $\nu$ with $\chi(\nu, \Phi)-\chi(\mu , \Phi)= \kappa>0.$ One can  define the affine map $T_{\nu}:\R_{+}\rightarrow \R$ by $T_{\nu}(t)=h_{\nu}(T) +t \chi(\nu, \Phi)$. We know that $t \mapsto \chi(\mu_{t}, \Phi)$ is a function which increases to its limit $\chi(\mu, \Phi)$, so 
\begin{align*}
&\chi(\mu, \Phi)\geq  \chi(\mu_{t_{\ast}}, \Phi)=\partial^{e} P(t_{\ast}) ,\textrm{where}\hspace{0.1cm}t_{\ast}=t_{-}\hspace{0.1cm} \textrm{or} \hspace{0.12cm}t_{+},\\
& \textrm{and}\hspace{0.2cm}T_{\nu}^{'}(t)=\chi(\nu, \Phi)=\chi(\mu, \Phi)+\kappa\geq \partial^{e} P(t_{\ast})+\kappa.
\end{align*}
Consequently,
 $h_{\nu}(T)+t\chi(\nu, \Phi) > P(t)$ for all sufficiently large $t>0$ that contradicts our assumption. So, $\mu$ is a Lyapunov maximizing measure.
 
 Moreover, our proof implies that $\beta(\Phi)$ can be approximated by 
 Lyapunov exponents of equilibrium measures of a subadditive potential $t\Phi$.
\end{proof}
 Theorem \ref{main1} $(ii)$ is obtained by combining Lemmas \ref{zero2} and \ref{compact1} below.
\begin{lem}\label{zero2}
The maps $t\mapsto h_{\mu_{t}}(T)$ and $t\mapsto P(t\Phi-t \beta(\Phi))$ are non-increasing and bounded below on the interval $(0, \infty)$. Moreover, we have
\[ \lim_{t \rightarrow \infty} h_{\mu_{t}}(T)=\lim_{t\rightarrow \infty} P(t\Phi-t\beta(\Phi))\geq \sup_{\nu \in \mathcal{M}_{\max}(\Phi)} h_{\nu}(T) .\]
\end{lem}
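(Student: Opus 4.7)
The plan is to exploit convexity of the pressure function together with an auxiliary expression for $g(t) := P(t\Phi - t\beta(\Phi))$ in terms of $\mu_t$. The key observation is that adding the constant $-t\beta(\Phi)$ to $t\Phi$ shifts both sides of the variational principle in Theorem \ref{var1} by the same amount, so $\mu_t$ is also an equilibrium state for $t\Phi - t\beta(\Phi)$. This gives the central identity
\[
g(t) = P(t) - t\beta(\Phi) = h_{\mu_t}(T) + t(\chi(\mu_t, \Phi) - \beta(\Phi)),
\]
and hence $h_{\mu_t}(T) - g(t) = t(\beta(\Phi) - \chi(\mu_t, \Phi)) \geq 0$ for every $t>0$. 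Around this identity everything else falls into place.

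For monotonicity I would treat $g$ first. By Proposition \ref{eq11} and Theorem \ref{interval}, every subgradient of $P$ at any $t>0$ lies in $(-\infty, \beta(\Phi)]$, so every subgradient of $g$ is non-positive. Theorem \ref{incre1}, applied to the one-sided derivatives of the convex function $g$ (which agree with the derivative off a countable set), then yields that $g$ is non-increasing. For $h_{\mu_t}(T)$ the classical exchange trick works: writing the variational inequality for $\mu_{t_1}$ with test measure $\mu_{t_2}$, and vice versa, and adding, gives $(t_2 - t_1)(\chi(\mu_{t_2}, \Phi) - \chi(\mu_{t_1}, \Phi)) \geq 0$; plugging this back into either inequality yields $h_{\mu_{t_1}}(T) \geq h_{\mu_{t_2}}(T)$ whenever $t_1 < t_2$.

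Boundedness from below and the lower bound on the limits come from a single test-measure argument. The set $\mathcal{M}_{\max}(\Phi)$ is non-empty by Theorem \ref{zero1}; fix any $\nu \in \mathcal{M}_{\max}(\Phi)$. Then the variational principle immediately gives $g(t) \geq h_\nu(T) + t(\chi(\nu, \Phi) - \beta(\Phi)) = h_\nu(T)$, while the defining inequality for $\mu_t$ combined with $\chi(\mu_t, \Phi) \leq \beta(\Phi)$ gives $h_{\mu_t}(T) \geq h_\nu(T) + t(\beta(\Phi) - \chi(\mu_t, \Phi)) \geq h_\nu(T)$. Since $g$ and $h_{\mu_t}(T)$ are non-increasing and bounded below, both limits exist, and both are $\geq \sup_{\nu \in \mathcal{M}_{\max}(\Phi)} h_\nu(T)$.

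The main obstacle is showing the two limits coincide, equivalently $t(\beta(\Phi) - \chi(\mu_t, \Phi)) \to 0$. I would extract this from convexity of $g$ alone. Set $v := \chi(\mu_t, \Phi) - \beta(\Phi) \in \partial g(t)$; then $v \leq 0$, and convexity of $g$ evaluated at $s = t/2$ reads
\[
g(t/2) \geq g(t) + v(t/2 - t),
\]
which rearranges to $0 \leq (\beta(\Phi) - \chi(\mu_t, \Phi)) \cdot t/2 \leq g(t/2) - g(t)$. Because $g$ is non-increasing and bounded below it has a finite limit at infinity, so the right-hand side tends to $0$, which forces $t(\beta(\Phi) - \chi(\mu_t, \Phi)) \to 0$. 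Feeding this back into the central identity yields $\lim_{t \to \infty} h_{\mu_t}(T) = \lim_{t \to \infty} g(t)$, completing the proof of the lemma.
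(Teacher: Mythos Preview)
Your argument is correct and follows the same overall architecture as the paper's: both deduce that $g(t)=P(t\Phi-t\beta(\Phi))$ is non-increasing from the non-positivity of its subgradients via Theorem~\ref{incre1}, and both work through the identity $g(t)=h_{\mu_t}(T)+t(\chi(\mu_t,\Phi)-\beta(\Phi))$. There are two places where you diverge in detail. For the monotonicity of $t\mapsto h_{\mu_t}(T)$ the paper writes $h_{\mu_t}(T)=P(t)-t\,\partial^{e}P(t_{\ast})$ and uses the secant--subgradient inequalities $\partial^{e}P(x_{\ast})\le\frac{P(y)-P(x)}{y-x}\le\partial^{e}P(y_{\ast})$, while you use the exchange trick between two equilibrium states; the paper's route is more in line with its convex-analysis setup, but yours is self-contained and sidesteps any left/right derivative ambiguity. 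For the equality of the two limits, the paper notes that $t(\chi(\mu_t,\Phi)-\beta(\Phi))=g(t)-h_{\mu_t}(T)$ has a limit (as a difference of convergent quantities) and then asserts equality; you instead bound $0\le t(\beta(\Phi)-\chi(\mu_t,\Phi))\le 2\bigl(g(t/2)-g(t)\bigr)\to 0$ directly from the subgradient inequality at $t/2$. This last step is an actual gain in clarity: the paper's sentence establishes only that the difference converges, and an estimate like yours is what is really needed to show that the limit is zero.
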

\begin{proof}
The map $t \mapsto P(t\Phi- t\beta (\Phi))$ is convex. By the definition of $\beta(\Phi)$,
\[ \chi(\mu_{t}, \Phi) \leq \beta(\Phi)\hspace{0.2cm}\textrm{for all}\hspace{0.1cm}\mu_{t}\in \Eq(t).\]

We assume that $P(t)=P(t\Phi)$. By the definition of the topological pressure, $P(t \Phi- t\beta(\Phi))=P(t\Phi)- t\beta(\Phi)$. Then, \[\partial^{e} P(t_{\ast}\Phi-t_{\ast}\beta(\Phi))=\partial^{e} P(t_{\ast}\Phi)-\beta(\Phi)=\chi(\mu_{t_{\ast}},\Phi) -\beta(\Phi) \leq 0,\] where $t_{\ast}=t_{-}\hspace{0.1cm} \textrm{or} \hspace{0.12cm}t_{+}$. Thus, $P(t\Phi -t\beta(\Phi))$ is non-increasing by Theorem \ref{incre1}. We are going to show that $t\mapsto h_{\mu_{t}}(T)$ is non-increasing. Since $\mu_{t}$ is an equilibrium measure,
\[ h_{\mu_{t_{\ast}}}(T)=P(t)-t\partial^{e} P(t_{\ast}).\] 

 For $0<x<y$ we have \[\partial^{e} P(x_{\ast}) \leq \frac{P(y)-P(x)}{y-x} \leq  \partial^{e} P(y_{\ast}),\] so $\displaystyle {P(y)-P(x)} \leq y \partial^{e} P(y_{\ast})-x \partial^{e} P(y_{\ast})\leq y \partial^{e} P(y_{\ast})-x \partial^{e} P(x_{\ast})$, and then \[P(y)-y \partial^{e} P(y_{\ast})\leq P(x)-x \partial^{e} P(x_{\ast}).\] 

Since $t \mapsto h_{\mu_{t}}(T)$ and $ t \mapsto P(t\Phi-t\beta(\Phi)) \geq 0$ are non-increasing and non-negative, we conclude that $\lim_{t \rightarrow \infty} h_{\mu_{t}}(T)$ and $\lim_{t\rightarrow \infty}P(t\Phi-t\beta(\Phi))$ both exist. This implies that the limit
\[\lim_{t\rightarrow \infty} t\partial^{e} P(t)-t\beta (\Phi)=\lim_{t\rightarrow \infty}(P(t\Phi-t\beta(\Phi))-h_{\mu_{t}}(T))\]
exists. Then,
\[\lim_{t\rightarrow \infty}h_{\mu_{t}}(T)=\lim_{t\rightarrow \infty}P(t\Phi-t\beta(\Phi)).\]
The last part follows from the variational principle.
\end{proof}

\begin{lem}\label{compact1}
 $\mathcal{M}_{\max}(\Phi)$ is  compact,  convex and  nonempty,  and  its  extreme  points  are  precisely  its  ergodic  elements.

\end{lem}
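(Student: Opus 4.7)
The plan is to verify the four assertions in turn. \emph{Nonemptiness} is immediate from Theorem \ref{zero1}, which produces a Lyapunov maximizing measure as a weak$^\ast$ accumulation point of any family of equilibrium states $(\mu_{t})_{t>0}$.

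\emph{Convexity} and \emph{compactness} I would deduce from two standard properties of the functional $\mu \mapsto \chi(\mu, \Phi)$ on $\mathcal{M}(X,T)$: it is affine and upper semi-continuous. Affinity follows from the fact that for each fixed $n$ the map $\mu \mapsto \frac{1}{n}\int \log \phi_{n}\, d\mu$ is linear, combined with the representation $\chi(\mu, \Phi) = \inf_{n} \frac{1}{n}\int \log \phi_{n}\, d\mu$ (valid because $n \mapsto \int \log \phi_{n}\, d\mu$ is subadditive for invariant $\mu$) and the finiteness $\chi(\cdot, \Phi) = \beta(\Phi) > -\infty$ on $\mathcal{M}_{\max}(\Phi)$. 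Upper semi-continuity is a consequence of Theorem \ref{con1} applied to a convergent sequence $\nu_{n} \to \mu$ of invariant measures: the subadditive bound $\chi(\nu_{n}, \Phi) \leq \frac{1}{m}\int \log \phi_{m}\, d\nu_{n}$ holds for every fixed $m$, and passing first $n \to \infty$ and then $m \to \infty$ yields $\limsup_{n} \chi(\nu_{n}, \Phi) \leq \chi(\mu, \Phi)$. Convexity of $\mathcal{M}_{\max}(\Phi)$ is then an immediate consequence of affinity, and compactness follows from upper semi-continuity combined with the bound $\chi(\cdot, \Phi) \leq \beta(\Phi)$, which exhibits $\mathcal{M}_{\max}(\Phi)$ as a closed subset of the compact space $\mathcal{M}(X,T)$.

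For the \emph{extreme-point} characterisation, one direction is trivial: any ergodic $\mu \in \mathcal{M}_{\max}(\Phi)$ is already extreme in the larger convex set $\mathcal{M}(X,T)$, hence a fortiori extreme in $\mathcal{M}_{\max}(\Phi)$. For the converse, suppose $\mu \in \mathcal{M}_{\max}(\Phi)$ is not ergodic and consider its ergodic decomposition $\mu = \int \mu_{x}\, d\mu(x)$. An integral version of affinity (proved via Kingman's subadditive ergodic theorem and dominated convergence, applicable since $\phi_{1}$ is continuous on the compact space $X$) gives $\chi(\mu, \Phi) = \int \chi(\mu_{x}, \Phi)\, d\mu(x)$. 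Since $\chi(\mu_{x}, \Phi) \leq \beta(\Phi) = \chi(\mu, \Phi)$ pointwise, equality forces $\chi(\mu_{x}, \Phi) = \beta(\Phi)$ for $\mu$-almost every $x$. Using non-ergodicity, one partitions the parameter space into two measurable sets of positive measure whose conditional averages yield distinct measures $\mu_{1}, \mu_{2} \in \mathcal{M}_{\max}(\Phi)$, and the resulting decomposition $\mu = \lambda \mu_{1} + (1-\lambda)\mu_{2}$ contradicts extremality.

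The main delicate step is the integral affinity identity in the extreme-point argument: since $\chi$ is defined by a limit, commuting it with the integral over the ergodic decomposition requires justification. Once this standard ingredient is in place, the remaining parts of the lemma reduce to routine convex-analytic manipulations.
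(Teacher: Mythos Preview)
Your argument is correct and essentially self-contained, whereas the paper does not actually prove this lemma: it simply refers the reader to Morris \cite[Appendix~A]{M3}. A few minor remarks on your write-up. First, the infimum representation $\chi(\mu,\Phi)=\inf_{n}\frac{1}{n}\int\log\phi_{n}\,d\mu$ yields only \emph{concavity} of $\chi(\cdot,\Phi)$, not affinity; but concavity together with the universal bound $\chi\le\beta(\Phi)$ already suffices for convexity of $\mathcal{M}_{\max}(\Phi)$, and genuine affinity (which you do need in the extreme-point step) follows more transparently from the \emph{limit} representation. Second, your upper semi-continuity argument does not really use Theorem~\ref{con1} as stated; what you describe is the direct argument via $\chi(\nu_{n},\Phi)\le\frac{1}{m}\int\log\phi_{m}\,d\nu_{n}$ and passage to the limit, which works because $\log\phi_{m}$ is upper semi-continuous and bounded above (note $\phi_{m}$ may vanish, so it need not be continuous). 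Third, nonemptiness does not require invoking Theorem~\ref{zero1} and hence the entropy hypotheses: once upper semi-continuity of $\chi$ is established, the supremum in \eqref{invariant1} is attained on the compact set $\mathcal{M}(X,T)$. These are cosmetic points; your proof is sound and has the advantage of being explicit where the paper defers to an external reference.
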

\begin{proof}
See \cite[Appendix A]{M3}.
\end{proof}
We write Theorem \ref{main1} $(ii)$ as follows.
\begin{thm}\label{cont1en}
$h_{\mu }(T)=\lim_{t \rightarrow \infty} h_{\mu_{t}}(T)=\max\{h_{\nu}(T), \nu \in \mathcal{M}_{\max}(\Phi)\}$.
\end{thm}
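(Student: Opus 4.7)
The plan is to combine three ingredients already in hand: the upper semi-continuity of the entropy map, the fact (from Theorem \ref{zero1}) that every weak$^{\ast}$ accumulation point $\mu$ of $(\mu_{t})$ lies in $\mathcal{M}_{\max}(\Phi)$, and the lower bound
\[\lim_{t\to\infty} h_{\mu_{t}}(T)\ \geq\ \sup_{\nu\in\mathcal{M}_{\max}(\Phi)} h_{\nu}(T)\]
supplied by Lemma \ref{zero2}. The existence of the limit $\lim_{t\to\infty} h_{\mu_{t}}(T)$ is also guaranteed by Lemma \ref{zero2}, since $t\mapsto h_{\mu_{t}}(T)$ is non-increasing and bounded below.

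First I would pick a sequence $t_{k}\to\infty$ with $\mu_{t_{k}}\to\mu$ in the weak$^{\ast}$ topology. By the hypothesis that $\nu\mapsto h_{\nu}(T)$ is upper semi-continuous, this yields
\[h_{\mu}(T)\ \geq\ \limsup_{k\to\infty} h_{\mu_{t_{k}}}(T)\ =\ \lim_{t\to\infty} h_{\mu_{t}}(T),\]
where the last equality uses monotonicity of $t\mapsto h_{\mu_{t}}(T)$ from Lemma \ref{zero2}. This is the ``easy direction.''

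Next, by Theorem \ref{zero1} the accumulation point $\mu$ already belongs to $\mathcal{M}_{\max}(\Phi)$, so in particular
\[h_{\mu}(T)\ \leq\ \sup_{\nu\in\mathcal{M}_{\max}(\Phi)} h_{\nu}(T).\]
Concatenating with the lower bound from Lemma \ref{zero2} gives the chain
\[h_{\mu}(T)\ \leq\ \sup_{\nu\in\mathcal{M}_{\max}(\Phi)} h_{\nu}(T)\ \leq\ \lim_{t\to\infty} h_{\mu_{t}}(T)\ \leq\ h_{\mu}(T),\]
so all three quantities coincide. Finally, since $\mu\in\mathcal{M}_{\max}(\Phi)$ itself realises the supremum, the supremum is attained and may be written as a maximum; alternatively, one can invoke Lemma \ref{compact1} (compactness of $\mathcal{M}_{\max}(\Phi)$) together with upper semi-continuity of entropy.

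I do not foresee a genuine obstacle: every analytic input has been prepared in advance. The only subtlety is making sure the limit in Lemma \ref{zero2} is used correctly (it exists because of monotonicity, not merely as a $\limsup$), and that upper semi-continuity is applied to the original sequence $\mu_{t_{k}}$ rather than to the whole net $(\mu_{t})$. Both are routine once stated carefully.
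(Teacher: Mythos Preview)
Your proposal is correct and follows essentially the same approach as the paper: both combine Theorem \ref{zero1} ($\mu\in\mathcal{M}_{\max}(\Phi)$), Lemma \ref{zero2} (the lower bound $\lim_{t\to\infty}h_{\mu_t}(T)\geq\sup_{\nu\in\mathcal{M}_{\max}(\Phi)}h_\nu(T)$), and upper semi-continuity of entropy into the same chain of inequalities, with Lemma \ref{compact1} (or your observation that $\mu$ itself attains the supremum) upgrading the $\sup$ to a $\max$. The only difference is cosmetic ordering; your write-up is, if anything, slightly more explicit about why the limit exists and how upper semi-continuity is applied along a convergent subsequence.
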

\begin{proof}
By Theorem \ref{zero1} and Lemmas \ref{zero2} and \ref{compact1},
\[h_{\mu}(T)\leq \max_{\nu \in \mathcal{M}_{max}(\Phi)} h_{\nu}(T) \leq \lim_{t\rightarrow \infty} h_{\mu_{t}}(T),\]
the reverse inequality follows from upper semi-continuity of entropy.
\end{proof}
\begin{rem} Let $(T, \mathcal{A})$ be a locally constant cocycle. Then, one can prove Theorem \ref{main1} for Gibbs measures under the assumption that $(T, \mathcal{A})$ is irreducible (see \cite{FK}). Moreover, if $T:X\rightarrow X$ is a  mixing subshift of finite type  and $\mathcal{A}:X \rightarrow GL(d, \R)$ is a H\"older continuous function, then one can prove Theorem \ref{main1} for Gibbs measures under the generic assumption on $(T, \mathcal{A})$ (see \cite{P}).
\end{rem}
\begin{rem}\label{rem1} Let $ \vec{q} = (q_{1}, ..., q_{d})\in \R_{+}^{d}$, and $\vec{\Phi}=(\Phi_{1},...,\Phi_{d})=(\{\log \phi_{n,1}\}_{n=1}^{\infty},...,\\\{\log \phi_{n,d}\}_{n=1}^{\infty})$. Assume that $\vec{q}.\vec{\Phi}=\sum_{i=1}^{d} q_{i}\Phi_{i}$ is a subadditive potential $\{q_{i}\log \phi_{n, i}\}_{n=1}^{\infty}.$ We can write topological pressure, and maximal Lyapunov exponent of $\vec{\Phi}$, respectively
\[ P(\vec{q})=P(T,\vec{q}.\vec{\Phi}),\hspace{0.3cm} \beta(\vec{\Phi})=\beta(\sum_{i=1}^{d} \Phi_{i}).\]
  Feng and Huang \cite{FH} proved the higher dimensional versions of Theorem \ref{max1}, Proposition \ref{eq11}, and Theorem \ref{interval}.  So, one can obtain the higher dimensional versions of Theorem \ref{main1} by using \cite{FH}. 
\end{rem}

\section{Proof of the Theorem \ref{main3}}

In this section, we consider locally constant cocycles and we prove Theorem \ref{main3}. We use the continuity of Lyapunov exponents (Theorem \ref{cont1}), the Anosov closing property and Theorem \ref{con1} for the proof.

Let $(T, \mathcal{A})$ be the locally constant cocycle which is defined in Example \ref{ex1} and $\mathcal{A}:X\rightarrow GL(2,\R)$. We denote $\chi(\mu, \mathcal{A}):=\chi(\mu, \Phi)$, where $\phi_{n}=\|\mathcal{A}_{n}\|$.

Bocker and Viana \cite{BV} proved the continuity of Lyapunov exponents for two 
dimensional locally constant cocycles. In order to state the result of Bocker and Viana, we denote by $\triangle_{k}$ the collection of strictly positive probability vectors in $\R^k$ for $k\geq 2$. We denote by $X$ the full shift space over $k$ symbols. For $p= (p_{1},...,p_{k})\in \triangle_{k}$, let $\mu$ be the associated Bernoulli product measure on $X$.
\begin{thm}[{{\cite[Theorem B]{BV}}}]\label{cont1}
For every $\epsilon >0$ there exist $\delta >0$ and a weak$^{\ast}$ neighborhood $V$ of  $\mu$ in the space of probability measures on $GL(2,\R)$ such that for every probability measure $\mu^{'}\in V$ whose support is contained in the $\delta$-neighborhood of the support of $\mu$, we have
\[|\chi(\mu, \mathcal{A})-\chi(\mu^{'}, \mathcal{A}^{'})|< \epsilon.\] 
\end{thm}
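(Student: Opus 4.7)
The plan is to exploit Furstenberg's integral representation of the top Lyapunov exponent in terms of stationary probability measures on the projective line $\mathbb{RP}^{1}$ and carefully analyze how these measures depend on the underlying distribution on $GL(2, \R)$. Writing $\nu := \sum_{i} p_{i} \delta_{A_{i}}$ for the push-forward of $(p, \mathcal{A})$ to $GL(2, \R)$, and $\nu'$ for the analogous object associated with $(\mu', \mathcal{A}')$, Furstenberg's formula gives
\[
\chi(\mu, \mathcal{A}) \;=\; \int_{GL(2,\R)} \int_{\mathbb{RP}^{1}} \log \frac{\|A v\|}{\|v\|}\, d\hat\nu([v])\, d\nu(A),
\]
where $\hat\nu \in \mathcal{M}(\mathbb{RP}^{1})$ is any $\nu$-stationary measure, i.e.\ $\hat\nu = \int A_{*}\hat\nu\, d\nu(A)$, and analogously for $\nu'$. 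The problem of continuity of $\chi$ thus reduces to controlling both the dependence $\nu \mapsto \hat\nu$ and the dependence of the integrand on the pair $(\nu, \hat\nu)$.

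First, I would establish a soft continuity property: if $\nu_{n} \to \nu$ weak$^{*}$ and $\hat\nu_{n}$ is $\nu_{n}$-stationary, then every weak$^{*}$ accumulation point of $(\hat\nu_{n})$ is $\nu$-stationary. This is routine, following from the weak$^{*}$-continuity of the Markov operator $L \mapsto \int A_{*} L\, d\nu(A)$ and compactness of $\mathcal{M}(\mathbb{RP}^{1})$. Combined with uniform continuity of the integrand on compact subsets of $GL(2,\R) \times \mathbb{RP}^{1}$ (ensured through the $\delta$-closeness of the supports, which also yields uniform bounds on $\|A\|$ and $\|A^{-1}\|$), this immediately gives continuity of $\chi$ at every $\nu$ for which the stationary measure is unique. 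In particular, at every \emph{irreducible} $\nu$, Furstenberg's theorem guarantees uniqueness, and the conclusion holds.

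The main obstacle, and the heart of the Bocker--Viana argument, is the \emph{reducible} case: when $\mathrm{supp}(\nu)$ admits a common invariant finite set $\{[v_{1}]\}$ or $\{[v_{1}], [v_{2}]\}$ in $\mathbb{RP}^{1}$, the stationary measures for $\nu$ need not be unique and typically include atomic measures concentrated on the invariant directions. A small perturbation $\nu'$ may become irreducible, and its unique stationary measure can be supported well away from those invariant directions. One must show that the Furstenberg integral for $\nu'$ nevertheless converges to the correct value. The key tool is a quantitative energy-type estimate on $\mathbb{RP}^{1}$ that controls how mass of $\hat\nu'$ can accumulate near, or flow away from, the invariant directions of $\nu$: either $\hat\nu'$ concentrates in neighborhoods of $\{[v_{1}], [v_{2}]\}$ (and its contribution to the integral matches the atomic contribution for $\nu$), or it spreads in a sufficiently regular manner that the averaged integrand remains close to the value for $\nu$.

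Finally, I would make the argument quantitative. Given $\epsilon > 0$, shrink the weak$^{*}$ neighborhood $V$ and $\delta > 0$ so that three conditions hold simultaneously: (a) $\|A\|$ and $\|A^{-1}\|$ are uniformly bounded over the relevant supports, giving a uniform $L^{\infty}$ bound on the integrand; (b) the Markov operators associated with $\nu'$ are close to that of $\nu$ in operator norm on $C(\mathbb{RP}^{1})$, so that stationary measures vary weakly$^{*}$ continuously outside an explicit atomic contribution; and (c) the energy estimate forces any mass of $\hat\nu'$ migrating away from the invariant directions of $\nu$ to do so with a definite contribution to $\chi(\mu', \mathcal{A}')$ matching the atomic contribution to $\chi(\mu, \mathcal{A})$. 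The hardest step by far is (c): it exploits planar conformal geometry in an essential way, and no analogue holds in dimensions $d \geq 3$, as witnessed by the Bocker--Viana counterexamples.
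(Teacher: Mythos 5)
The paper does not prove this statement: Theorem \ref{cont1} is Theorem B of Bocker--Neto and Viana \cite{BV}, quoted as an external black box and used only as an ingredient in the proof of Theorem \ref{speed}. So the real question is whether your outline would constitute an independent proof, and it would not. Two concrete problems. First, Furstenberg's formula is misstated: the identity $\chi(\mu,\mathcal{A})=\int\int \log(\|Av\|/\|v\|)\,d\hat\nu\,d\nu$ does \emph{not} hold for an arbitrary $\nu$-stationary measure $\hat\nu$; in general the integral only lies between the two exponents and equals the top exponent for \emph{some} stationary measure. For a reducible $\nu$ (say, Bernoulli on two diagonal matrices with distinct exponents) the atomic stationary measures on the two invariant lines give two different values, only one of which is $\chi$. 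This is not cosmetic: your ``soft continuity'' step (limits of $\nu'$-stationary measures are $\nu$-stationary) then only yields that the limiting Furstenberg integral is the integral against \emph{some} stationary measure of $\nu$, which gives the easy, classical semicontinuity statement; the entire content of Bocker--Viana is to rule out the drop of $\chi$ in the reducible case, i.e.\ to identify which stationary measure is selected in the limit.

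Second, that identification is exactly your step (c), which you assert as a ``quantitative energy-type estimate'' without proof. This is the heart of the Bocker--Viana paper (a delicate analysis of how the mass of the perturbed stationary measure near the unperturbed invariant directions couples with the exponents of the restricted one-dimensional cocycles), and it occupies most of their argument; reducing the theorem to it leaves a gap of essentially the same depth as the theorem itself. A minor but real factual error: your closing claim that no analogue holds in dimension $d\geq 3$ ``as witnessed by the Bocker--Viana counterexamples'' contradicts the situation for locally constant (i.i.d.) cocycles in this topology --- as the paper's own remark after Theorem \ref{speed} notes, Avila, Eskin and Viana have announced that the continuity statement persists in arbitrary dimension; the known discontinuity examples live in more general (non-random) settings or finer topologies.
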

Now, we can prove Theorem \ref{main3}.
\begin{thm}\label{speed}
 Suppose that $T$ satisfies the Anosov closing property. Then the maximal Lyapunov exponent $\beta(\Phi)$ can be approximated by Lyapunov exponents of measures supported on periodic orbits.
\end{thm}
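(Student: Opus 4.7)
The plan is to combine the three tools flagged in the paragraph preceding the statement: (i) the Anosov closing property, (ii) the comparison inequality Theorem \ref{con1} for Kingman-type limits, and (iii) the Bocker-Viana continuity Theorem \ref{cont1}. I would fix a near-maximizing ergodic measure $\mu$, use Anosov closing along a generic orbit of $\mu$ to produce periodic orbits whose orbit measures $\sigma_{n}$ converge to $\mu$ weakly, and then pass Lyapunov exponents to the limit using continuity.

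Concretely, fix $\epsilon>0$. Using Morris's identity (\ref{invariant1}), I would choose an ergodic invariant measure $\mu$ with $\chi(\mu,\mathcal{A})>\beta(\Phi)-\epsilon/2$. By Kingman's subadditive ergodic theorem together with Birkhoff, pick a point $x\in X$ that is generic for $\mu$ and satisfies $\frac{1}{n}\log\|\mathcal{A}_{n}(x)\|\to \chi(\mu,\mathcal{A})$. In the full-shift setting of Example \ref{ex1} the Anosov closing property is immediate: letting $p_{n}$ be the periodic extension of the block $x_{0}x_{1}\ldots x_{n-1}$ gives a point of period $n$ whose orbit matches that of $x$ in positions $0,\ldots,n-1$. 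Because the cocycle is locally constant, $\mathcal{A}_{n}(p_{n})=\mathcal{A}_{n}(x)$ exactly, so $\frac{1}{n}\log\|\mathcal{A}_{n}(p_{n})\|\to \chi(\mu,\mathcal{A})$, and the shadowing together with $\mu$-genericity of $x$ yields that the periodic orbit measures $\sigma_{n}:=\frac{1}{n}\sum_{i=0}^{n-1}\delta_{T^{i}p_{n}}$ converge to $\mu$ in the weak-$\ast$ topology. Since $\chi(\sigma_{n},\mathcal{A})=\frac{1}{n}\log\rho(\mathcal{A}_{n}(p_{n}))$ is not obviously close to $\frac{1}{n}\log\|\mathcal{A}_{n}(p_{n})\|$, I would first use Theorem \ref{con1} applied with $\nu_{n}=\delta_{p_{n}}$ as a consistency check: any weak-$\ast$ accumulation point of $\{\sigma_{n}\}$ has Lyapunov exponent at least $\lim\frac{1}{n}\log\|\mathcal{A}_{n}(p_{n})\|=\chi(\mu,\mathcal{A})$, confirming the limit is compatible with $\mu$. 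The decisive step is then Theorem \ref{cont1}: since $\sigma_{n}\to \mu$ weakly and all the measures are supported on the common finite full shift, one obtains $\chi(\sigma_{n},\mathcal{A})\to \chi(\mu,\mathcal{A})>\beta(\Phi)-\epsilon/2$, so $\chi(\sigma_{n},\mathcal{A})>\beta(\Phi)-\epsilon$ for all sufficiently large $n$.

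The main obstacle is that Theorem \ref{cont1} as stated demands the reference measure to be Bernoulli, whereas the near-maximizer $\mu$ is a priori only ergodic. To handle this I would first pass to a block-coded shift $X^{(N)}=(\{1,\ldots,k\}^{N})^{\Z}$ with the induced locally constant cocycle, and replace $\mu$ by the Bernoulli measure $\mu^{(N)}$ on $X^{(N)}$ whose marginals are given by the $N$-cylinder probabilities of $\mu$; a standard Markov-approximation argument gives $\chi(\mu^{(N)},\mathcal{A})\to \chi(\mu,\mathcal{A})$ as $N\to\infty$, so $\mu^{(N)}$ can serve as the near-maximizing Bernoulli measure in the construction above. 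The support condition in Theorem \ref{cont1} is then automatic, since every $\sigma_{n}$ lives on the same finite-alphabet full shift as $\mu^{(N)}$.
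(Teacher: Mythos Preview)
Your overall strategy coincides with the paper's: choose an ergodic (near-)maximizing measure $\mu$, take a $\mu$-generic point $x$, use the Anosov closing property on initial orbit segments of $x$ to produce periodic points $p_n$ whose orbit measures converge weakly to $\mu$, and then invoke the Bocker--Viana continuity theorem (Theorem~\ref{cont1}) to pass the Lyapunov exponent to the limit. The paper isolates the weak-$\ast$ convergence $\mu_{p_n}\to\mu$ as a separate lemma (Lemma~\ref{lem1}) and then applies Theorem~\ref{cont1} directly, so your outline matches the paper's proof closely, including the observation that $\frac{1}{n}\log\|\mathcal{A}_n(p_n)\|$ is not the same as $\chi(\sigma_n,\mathcal{A})$ and that continuity is the bridge.

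You are right, however, to flag that Theorem~\ref{cont1} is stated only for a \emph{Bernoulli} reference measure with strictly positive weights; the paper simply applies it to the ergodic maximizer $\mu$ without comment. Unfortunately your proposed repair is circular. The claim that $\chi(\mu^{(N)},\mathcal{A})\to\chi(\mu,\mathcal{A})$ for the block-Bernoulli approximants $\mu^{(N)}$ is \emph{not} a ``standard Markov-approximation argument'': subadditivity gives only upper semi-continuity, i.e.\ $\limsup_{N}\chi(\mu^{(N)},\mathcal{A})\le\chi(\mu,\mathcal{A})$, whereas you need the opposite inequality (you want $\chi(\mu^{(N)},\mathcal{A})$ to remain above $\beta(\Phi)-\epsilon/2$). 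Securing that lower bound is precisely continuity of the top exponent at $\mu$, which is the nontrivial content of Bocker--Viana-type results and cannot be assumed. Note also that your $\mu^{(N)}$ need not have full support on the block alphabet, so even invoking Theorem~\ref{cont1} at $\mu^{(N)}$ requires care (the hypothesis $p\in\triangle_k$ demands strictly positive weights). In short, your main line matches the paper, but neither the paper nor your fix cleanly justifies the use of Theorem~\ref{cont1}; a rigorous argument would need either the general continuity announced by Avila--Eskin--Viana (cf.\ the remark following the theorem) or an independent route such as the Berger--Wang formula for the joint spectral radius, which bypasses continuity altogether.
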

\begin{proof}
 Let $\mu$ be an ergodic maximizing measure, that is $\beta(\Phi)=\chi(\mu, \Phi)$. 

Let $x$ be a generic point for $\mu$. Then there exists $\mu_{n,x}:=\frac{1}{n}\sum_{j=0}^{n-1}\delta_{T^{j}(x)}$, where $\delta_{x}$ is the Dirac measure at the point $x$, so that $\mu_{x,n}  \rightarrow \mu $. According to Theorem \ref{con1}, and (\ref{invariant1}) \[\lim_{n\rightarrow \infty}\frac{1}{n}\log\phi_{n}(x) =\chi(\mu, \Phi).\]

Let $p \in X$ be a periodic point associated to $\epsilon,C, \delta$ and
\[\{x, T(x),...,T^{n-1}(x)\}\] by the Anosov closing property. Denote by $\mu_{p}:=\frac{1}{n}\sum_{j=0}^{n-1}\delta_{T^{j}(p)}$  the ergodic $T-$invariant measure supported on the orbit of $p$.
 \begin{lem}\label{lem1}
  $\mu_{p}\rightarrow \mu$ in weak$^{\ast}$topology. 
  \end{lem}
  \begin{proof}
  We will use the Anosov closing property.
  
   Assume that $(f_{m})$ is a sequence of continuous functions. The periodic orbit $p$ has length $n$ and is close to the initial segment of the orbit of $x$. Since the $f_{m}$'s are continuous, the average of $f_{m}$ along the periodic orbit is very close to the average of $f_{m}$ along the first $n$ iterates of $x$, and that is very close to $\int f_{m} d\mu$ by the genericity condition. Then, for $n$ large enough, we get longer and longer periodic orbits, approaching $x$ more closely, and we obtain the convergence of the measures to $\mu$. 
  \end{proof}
We now use Lemma \ref{lem1} to finish the proof. By the Anosov closing property, the periodic point $p$ is close to $x$, with iterates also close to the iterates of $x$. Therefore, 
 Theorem \ref{cont1} implies for every $\epsilon>0$
\begin{equation}\label{eq111}
\chi(p, \Phi)=\lim_{n\rightarrow \infty}\frac{1}{n}\log\phi_{n}(p)= \lim_{n\rightarrow \infty}\frac{1}{n}\log\phi_{n}(x) +\epsilon.
\end{equation}
Applying Lemma \ref{lem1}, Theorem \ref{con1}  and (\ref{eq111}), we obtain 
 \[\chi(p, \Phi)=\lim_{n\rightarrow \infty}\frac{1}{n}\log\phi_{n}(p)=\chi(\mu,\Phi)+ \epsilon=\beta(\Phi)+ \epsilon. \]
\end{proof}
\begin{rem}
Avila, Eskin and Viana \cite{AEV} announced recently that the Theorem \ref{cont1} remains true in arbitrary dimensions. By their result, the proof given for Theorem \ref{cont1} works for arbitrary dimensions.
\end{rem}
\begin{rem} Morris \cite{M2} showed that the speed of convergence of Theorem \ref{speed} is always superpolynomial for locally constant cocycles. Moreover, Bochi and Garibaldi \cite{BG} showed that it is true for general cocycles under certain assumptions.
\end{rem}

\bibliography{ref}

\end{document}